\newcommand{\TITLE}{\(W\)-Types in Categories of Coalgebras}
\newcommand{\AUTHOR}{Taichi Uemura}
\newcommand{\cat}[1]{\mathcal{#1}}
\newcommand{\const}[1]{\mathrm{#1}}
\newcommand{\catconst}[1]{\mathbf{#1}}
\newcommand{\class}[1]{\mathscr{#1}}
\newcommand{\adj}{\dashv}
\newcommand{\Alg}{\const{Alg}}
\newcommand{\Poly}{\catconst{Poly}}
\newcommand{\To}{\Rightarrow}
\theoremstyle{plain}
\newtheorem{theorem}{Theorem}
\newtheorem{proposition}[theorem]{Proposition}
\newtheorem{lemma}[theorem]{Lemma}
\newtheorem{corollary}[theorem]{Corollary}
\theoremstyle{definition}
\newtheorem{definition}[theorem]{Definition}
\newtheorem{notation}[theorem]{Notation}
\theoremstyle{remark}
\newtheorem{example}[theorem]{Example}
\newtheorem{remark}[theorem]{Remark}
\title{\TITLE}
\author{\AUTHOR}
\begin{document}

\maketitle

\begin{abstract}
  We construct \(W\)-types in the category of coalgebras for a
  cartesian comonad. It generalizes the constructions of \(W\)-types
  in presheaf toposes and gluing toposes.
\end{abstract}

\section{Introduction}
\label{sec:introduction}

\(W\)-type is an important feature of Martin-L\"{o}f's dependent type
theory \cite{martin-lof1982constructive,nordstrom1990programming} and
allows us to define a wide range of inductive types. Its categorical
counterpart is the initial algebra for a polynomial endofunctor
\cite{moerdijk-palmgren2000wellfounded,gambino2004wellfounded}.

In this paper we construct \(W\)-types in the category
of coalgebras for a cartesian comonad. This is a generalization of the
constructions of \(W\)-types in presheaf toposes and Freyd cover given
by Moerdijk and Palmgren \cite{moerdijk-palmgren2000wellfounded} and
gluing \(\Pi W\)-pretoposes given by van den Berg
\cite{vandenberg2006predicative}.

In Section \ref{sec:w-types-categories} we review the notion of
\(W\)-type. In Section \ref{sec:comonads} we describe pushforwards in
the category of coalgebras for a cartesian comonad. In Section
\ref{sec:w-types-coalgebras} we show the main theorem (Theorem
\ref{thm:w-types-of-coalgebras}) and give some
applications. In Section \ref{sec:algebr-equal-corefl} we show a
technical lemma used in the proof of the main theorem.

\section{\(W\)-Types in Categories}
\label{sec:w-types-categories}
\begingroup
\newcommand{\E}{\cat{E}}
\newcommand{\PP}{\class{P}}

We review the notions of polynomial, algebra and \(W\)-type. First we
recall the notion of algebra for an endofunctor.

\begin{definition}
  Let \(P : \E \to \E\) be an endofunctor on a category \(\E\). A
  \emph{\(P\)-algebra} consists of an object \(X \in \E\) and a
  morphism \(s_{X} : PX \to X\). A \emph{morphism} of \(P\)-algebras
  \((X, s_{X}) \to (Y, s_{Y})\) is a morphism \(h : X \to Y\) in
  \(\E\) such that \(h \circ s_{X} = s_{Y} \circ Ph\). We shall denote
  by \(\Alg(P)\) the category of \(P\)-algebras and morphisms of
  \(P\)-algebras, and by \(U_{P} : \Alg(P) \to \E\) the obvious
  forgetful functor. An \emph{initial \(P\)-algebra} is an initial
  object in the category \(\Alg(P)\).
\end{definition}

\begin{definition}
  Let \(P : \cat{E} \to \cat{E}\) and \(Q : \cat{F} \to \cat{F}\) be
  endofunctors. A \emph{lax morphism \(P \to Q\)} is a pair \((H,
  \sigma)\) consisting of a functor \(H : \cat{E} \to \cat{F}\) and a
  natural transformation \(\sigma : Q H \To H P\). A \emph{strong
    morphism \(P \to Q\)} is a lax morphism \((H, \sigma) : P \to Q\)
  such that \(\sigma\) is an natural isomorphism.
\end{definition}

\begin{proposition}
  Let \(P : \cat{E} \to \cat{E}\) and \(Q : \cat{F} \to \cat{F}\) be
  endofunctors. A lax morphism \((H, \sigma) : P \to Q\) induces a
  functor \(\tilde{H} : \Alg(P) \to \Alg(Q)\) such that
  \(U_{Q} \tilde{H} = H U_{P}\) defined by
  \(\tilde{H}(X, s) = (H X, H s \circ \sigma(X))\).
\end{proposition}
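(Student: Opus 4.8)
The plan is to verify directly that the stated assignment extends to a functor and strictly commutes with the forgetful functors; all of this is formal manipulation with the functoriality of \(H\), the algebra-morphism equation, and the naturality of \(\sigma\). First I would check that \(\tilde{H}\) is well-defined on objects: for a \(P\)-algebra \((X, s)\) with \(s : PX \to X\), the composite \(Hs \circ \sigma(X)\) typechecks because \(\sigma(X) : QHX \to HPX\) and \(Hs : HPX \to HX\), so it is a morphism \(QHX \to HX\), i.e.\ a legitimate \(Q\)-algebra structure on \(HX\). Since a \(Q\)-algebra is merely an object equipped with such a morphism, there is nothing further to check at the level of objects.

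Next I would define \(\tilde{H}\) on morphisms by \(\tilde{H}(h) = Hh\) for a \(P\)-algebra morphism \(h : (X, s_{X}) \to (Y, s_{Y})\), and verify that \(Hh\) is a morphism of \(Q\)-algebras, that is, \(Hh \circ (Hs_{X} \circ \sigma(X)) = (Hs_{Y} \circ \sigma(Y)) \circ QHh\). Starting from the left-hand side, I would use functoriality of \(H\) to rewrite \(Hh \circ Hs_{X}\) as \(H(h \circ s_{X})\), then apply the hypothesis \(h \circ s_{X} = s_{Y} \circ Ph\) to get \(H(s_{Y} \circ Ph) = Hs_{Y} \circ HPh\), and finally invoke the naturality square of \(\sigma\) at \(h\), namely \(HPh \circ \sigma(X) = \sigma(Y) \circ QHh\), to reach the right-hand side. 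This single computation is the only part of the argument with any content, and it is exactly the place where the coherence datum of the lax morphism is used; I would flag the naturality of \(\sigma\) as the crux, though it is entirely routine.

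Finally I would note that \(\tilde{H}\) preserves identities and composition, since \(\tilde{H}(\mathrm{id}_{X}) = H(\mathrm{id}_{X}) = \mathrm{id}_{HX}\) and \(\tilde{H}(g \circ h) = H(g \circ h) = Hg \circ Hh = \tilde{H}(g) \circ \tilde{H}(h)\), both immediate from functoriality of \(H\). The equation \(U_{Q} \tilde{H} = H U_{P}\) then holds strictly by construction: on objects \(U_{Q} \tilde{H}(X, s) = HX = H U_{P}(X, s)\), and on morphisms \(U_{Q} \tilde{H}(h) = Hh = H U_{P}(h)\). I do not expect any genuine obstacle here; the proposition is a packaging statement, and the write-up is essentially the naturality chase described above together with these bookkeeping remarks.
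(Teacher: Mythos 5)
Your verification is correct: the type-check of \(Hs \circ \sigma(X)\), the naturality chase showing \(Hh\) is a \(Q\)-algebra morphism, and the bookkeeping for functoriality and \(U_{Q}\tilde{H} = HU_{P}\) are exactly the routine argument the paper leaves implicit (it states this proposition without proof). Nothing is missing.
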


The following is analogous to the adjoint lifting theorem for
(co)algebras for (co)monads by Johnstone \cite{johnstone1975adjoint}
and Keigher \cite{keigher1975adjunctions}.

\begin{proposition}
  \label{prop:adjoint-lifting-for-endofunctor}
  Let \(P : \cat{E} \to \cat{E}\) and \(Q : \cat{F} \to \cat{F}\) be
  endofunctors and \((H, \sigma) : P \to Q\) a strong
  morphism. Suppose that \(H\) has a right adjoint \(R\). Then the
  induced functor \(\tilde{H} : \Alg(P) \to \Alg(Q)\) has a right
  adjoint \(\tilde{R} : \Alg(Q) \to \Alg(P)\) such that
  \(U_{P} \tilde{R} = R U_{Q}\).
\end{proposition}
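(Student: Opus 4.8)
The plan is to obtain $\tilde{R}$ from a lax morphism structure on the right adjoint $R$, and then to show that the adjunction $H \adj R$ lifts to $\tilde{H} \adj \tilde{R}$ by transporting its unit, counit, and triangle identities along the forgetful functors, which are faithful since a morphism of algebras is an underlying morphism satisfying a property.

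Write $\eta : \const{id}_{\cat{E}} \To R H$ and $\varepsilon : H R \To \const{id}_{\cat{F}}$ for the unit and counit of $H \adj R$. First I would form the mate of $\sigma$: since $(H, \sigma)$ is strong, $\sigma$ is invertible, so $\sigma^{-1} : H P \To Q H$ is available, and I set $\tau : P R \To R Q$ to be the composite
\[
  \tau \;=\; (R Q \varepsilon) \circ (R\, \sigma^{-1}\, R) \circ (\eta\, P R),
\]
whose component at $Y$ is $\tau(Y) = R Q\varepsilon(Y) \circ R\sigma^{-1}(R Y) \circ \eta(P R Y)$. Then $(R, \tau)$ is a lax morphism $Q \to P$, so by the proposition above on functors induced by lax morphisms it yields a functor $\tilde{R} : \Alg(Q) \to \Alg(P)$, given by $\tilde{R}(Y, t) = (R Y,\; R t \circ \tau(Y))$, satisfying $U_{P}\tilde{R} = R U_{Q}$ on the nose.

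Next I would lift $\eta$ and $\varepsilon$ to algebras. The claim is that for every $P$-algebra $(X, s)$ the morphism $\eta(X) : X \to R H X$ underlies a morphism of $P$-algebras $(X, s) \to \tilde{R}\tilde{H}(X, s)$, and that for every $Q$-algebra $(Y, t)$ the morphism $\varepsilon(Y) : H R Y \to Y$ underlies a morphism of $Q$-algebras $\tilde{H}\tilde{R}(Y, t) \to (Y, t)$. Each is a diagram chase. For $\varepsilon$, one unfolds $\tilde{H}\tilde{R}(Y, t) = (H R Y,\; H(R t \circ \tau(Y)) \circ \sigma(R Y))$, inserts the formula for $\tau(Y)$, and reduces $\varepsilon(Y) \circ H(R t \circ \tau(Y)) \circ \sigma(R Y)$ to $t \circ Q\varepsilon(Y)$ using naturality of $\varepsilon$, the triangle identity $\varepsilon H \circ H\eta = \const{id}$, and $\sigma^{-1} \circ \sigma = \const{id}$. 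The chase for $\eta$ is dual, using naturality of $\eta$ and of $\sigma$, the same triangle identity, and $\sigma \circ \sigma^{-1} = \const{id}$. Because $U_{P}$ and $U_{Q}$ are faithful, the two families assemble automatically into natural transformations $\tilde{\eta} : \const{id}_{\Alg(P)} \To \tilde{R}\tilde{H}$ and $\tilde{\varepsilon} : \tilde{H}\tilde{R} \To \const{id}_{\Alg(Q)}$ with $U_{P}\tilde{\eta} = \eta U_{P}$ and $U_{Q}\tilde{\varepsilon} = \varepsilon U_{Q}$.

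Finally, the two triangle identities for $(\tilde{\eta}, \tilde{\varepsilon})$ are equations of natural transformations landing in $\Alg(Q)$ and in $\Alg(P)$; applying the faithful $U_{Q}$ and $U_{P}$ turns them into the triangle identities for $(\eta, \varepsilon)$, which hold by hypothesis. Hence $\tilde{H} \adj \tilde{R}$ with $U_{P}\tilde{R} = R U_{Q}$, as required. (Alternatively, one can avoid the unit and counit and check directly that the transposition bijection $\cat{F}(H X, Y) \cong \cat{E}(X, R Y)$ restricts to $\Alg(Q)(\tilde{H}(X, s), (Y, t)) \cong \Alg(P)((X, s), \tilde{R}(Y, t))$, which comes down to the same computation with $\tau$.) The real content is the pair of diagram chases — especially the one for $\varepsilon(Y)$, where the definition of the mate $\tau$ must be played off against the triangle identity — and I expect that to be the only genuine obstacle.
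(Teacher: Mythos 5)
Your proposal is correct and follows essentially the same route as the paper: form the mate of \(\sigma^{-1}\) to make \(R\) a lax morphism \(Q \to P\), obtain \(\tilde{R}\) with \(U_{P}\tilde{R} = R U_{Q}\), and then lift \(\eta\) and \(\varepsilon\) (with the triangle identities following via the faithful forgetful functors) to get \(\tilde{H} \adj \tilde{R}\). The only difference is that you spell out the diagram chases the paper leaves as ``one can show''.
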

\begin{proof}
  Let \(\eta\) and \(\varepsilon\) denote the unit and counit,
  respectively, of the adjunction \(H \adj R\). The functor \(R\) is
  part of a lax morphism \(Q \to P\) with the mate of \(\sigma^{-1}\)
  \[
    \begin{tikzcd}
      P R
      \arrow[r,"\eta P R"] &
      R H P R
      \arrow[r,"R \sigma^{-1} R"] &
      R Q H R
      \arrow[r,"R Q \varepsilon"] &
      R Q.
    \end{tikzcd}
  \]
  Hence we have a functor \(\tilde{R} : \Alg(Q) \to \Alg(P)\) such
  that \(U_{P} \tilde{R} = R U_{Q}\). One can show that \(\eta\) and
  \(\varepsilon\) defines natural transformations \(1 \To \tilde{R}
  \tilde{H}\) and \(\tilde{H} \tilde{R} \To 1\) respectively, so
  \(\tilde{R}\) is the right adjoint to \(\tilde{H}\).
\end{proof}

\(W\)-types are defined as the initial algebras for endofunctors
represented by polynomials.

\begin{notation}
  Let \(\E\) be a locally cartesian closed category. For a morphism
  \(f : I \to J\) in \(\E\), we shall denote by \(f^{*}\) the pullback
  functor \(\E/J \to \E/I\), by \(f_{!}\) or \(\Sigma^{\E}_{f}\) its
  left adjoint, and by \(f_{*}\) or \(\Pi^{\E}_{f}\) its right
  adjoint.
  \[
    \begin{tikzcd}
      \E/I \arrow[r,shift left=2ex,"f_{!}"]
      \arrow[r,shift right=2ex,"f_{*}"'] &
      \E/J \arrow[l,"f^{*}" description]
    \end{tikzcd}
    \quad
    f_{!} \adj f^{*} \adj f_{*}
  \]
  We call \(f_{*}\) the \emph{pushforward along \(f\)}.

  For a functor \(H : \cat{E} \to \cat{F}\) and an object \(I \in
  \cat{E}\), we denote by \(H/I : \cat{E}/I \to \cat{F}/H I\) the
  functor \((f : X \to I) \mapsto (H f : H X \to H I)\).
\end{notation}

\begin{definition}
  Let \(\E\) be a locally cartesian closed category. For objects \(I, J
  \in \E\), a \emph{polynomial} from \(I\) to \(J\) is a triple \((f,
  g, h)\) of morphisms of the form
  \[
    \begin{tikzcd}
      I &
      A \arrow[l,"h"'] \arrow[r,"g"] &
      B \arrow[r,"f"] &
      J.
    \end{tikzcd}
  \]
  For a polynomial \((f, g, h)\) from \(I\) to \(J\), we define a
  functor \(P_{f, g, h} : \E/I \to \E/J\) to be
  \(f_{!}g_{*}h^{*}\). We say a functor \(P : \E/I \to \E/J\) is
  represented by a polynomial \((f, g, h)\) from \(I\) to \(J\) when
  \(P\) is isomorphic to \(P_{f, g, h}\). We shall denote by
  \(\Poly_{\E}(I, J)\) the class of functors \(P : \E/I \to \E/J\)
  that are represented by some polynomial from \(I\) to \(J\). We
  often omit the subscript and simply write \(\Poly(I, J)\) when the
  category \(\E\) is clear from the context.
\end{definition}

\begin{definition}
  Let \(\E\) be a locally cartesian closed category. For an object
  \(I \in \E\), an \emph{endopolynomial} on \(I\) is a polynomial of
  the form
  \[
    \begin{tikzcd}
      I &
      A \arrow[l,"h"'] \arrow[r,"g"] &
      B \arrow[r,"f"] &
      I.
    \end{tikzcd}
  \]
  We say \(\E\) \emph{has \(W\)-types} or is a locally cartesian
  closed category \emph{with \(W\)-types} if, for any endopolynomial
  \((f, g, h)\) on any object \(I\), there exists an initial
  \(P_{f, g, h}\)-algebra.
\end{definition}

\begin{remark}
  \(W\)-types in this paper are what are called dependent \(W\)-types
  or indexed \(W\)-types in the literature. Gambino and Hyland
  \cite{gambino2004wellfounded} showed that a locally cartesian closed
  category has all non-dependent \(W\)-types if and only if it has all
  dependent \(W\)-types. So our terminology ``having \(W\)-types''
  coincides with that of Moerdijk and Palmgren
  \cite{moerdijk-palmgren2000wellfounded}.
\end{remark}

\begin{definition}
  Let \(\cat{E}\) and \(\cat{F}\) be locally cartesian closed categories
  and \(H : \cat{E} \to \cat{F}\) a locally cartesian closed functor,
  namely a functor that preserves finite limits and pushforwards. By
  definition, for an endopolynomial \((f, g, h)\) on an object
  \(I \in \cat{E}\), the functor \(H/I : \cat{E}/I \to \cat{F}/H I\)
  is part of a strong morphism \(P_{f, g, h} \to P_{H f, H g, H
    h}\). Suppose \(\cat{E}\) and \(\cat{F}\) have \(W\)-types. We say
  \(H\) \emph{preserves \(W\)-types} if the induced functor
  \(\Alg(P_{f, g, h}) \to \Alg(P_{H f, H g, H h})\) preserves initial
  objects for any endopolynomial \((f, g, h)\) in \(\cat{E}\).
\end{definition}

By Proposition \ref{prop:adjoint-lifting-for-endofunctor} we have the
following.

\begin{proposition}
  \label{prop:left-adjoint-preserves-w-types}
  Let \(\cat{E}\) and \(\cat{F}\) be locally cartesian closed
  categories with \(W\)-types and \(H : \cat{E} \to \cat{F}\) a
  locally cartesian closed functor. If \(H\) has a right adjoint, then
  \(H\) preserves \(W\)-types.
\end{proposition}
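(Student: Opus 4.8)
The plan is to apply Proposition~\ref{prop:adjoint-lifting-for-endofunctor} to each endopolynomial and then transport initiality across the resulting adjunction. Fix an endopolynomial \((f, g, h)\) on an object \(I \in \cat{E}\). Since \(H\) is locally cartesian closed, it preserves finite limits and pushforwards, so the comparison transformations \(H/B \circ f_{!} \To (Hf)_{!} \circ H/A\) and the canonical maps for \(g_{*}\) and \(h^{*}\) are all isomorphisms; composing them exhibits \(H/I : \cat{E}/I \to \cat{F}/HI\) as part of a \emph{strong} morphism \((H/I, \sigma) : P_{f,g,h} \to P_{Hf,Hg,Hh}\), exactly as recorded in the definition of ``preserves \(W\)-types''. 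This is the hypothesis format needed to invoke Proposition~\ref{prop:adjoint-lifting-for-endofunctor}.

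Next I would check that \(H/I\) has a right adjoint. Given that \(H : \cat{E} \to \cat{F}\) has a right adjoint \(R\), the slice functor \(H/I\) has a right adjoint as well: for an object \((q : Y \to HI)\) in \(\cat{F}/HI\) one forms \(RY \to RHI\) and pulls back along the unit \(\eta_{I} : I \to RHI\), i.e.\ the right adjoint sends \(q\) to \(\eta_{I}^{*}(Rq)\). (Equivalently, \(H/I\) is the composite of \(H\) on total objects with a change-of-base, and both have right adjoints.) Hence \(H/I\) satisfies the hypothesis ``\(H\) has a right adjoint'' of Proposition~\ref{prop:adjoint-lifting-for-endofunctor}, now applied with \((\cat{E}/I, P_{f,g,h})\) in place of \((\cat{E}, P)\) and \((\cat{F}/HI, P_{Hf,Hg,Hh})\) in place of \((\cat{F}, Q)\).

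By Proposition~\ref{prop:adjoint-lifting-for-endofunctor}, the induced functor \(\widetilde{H/I} : \Alg(P_{f,g,h}) \to \Alg(P_{Hf,Hg,Hh})\) then has a right adjoint \(\widetilde{R'}\), where \(R'\) is the right adjoint to \(H/I\). A left adjoint always preserves initial objects, so \(\widetilde{H/I}\) sends the initial \(P_{f,g,h}\)-algebra (which exists because \(\cat{E}\) has \(W\)-types) to an initial \(P_{Hf,Hg,Hh}\)-algebra. This is precisely the statement that \(H\) preserves the \(W\)-type for \((f,g,h)\); since \((f,g,h)\) was arbitrary, \(H\) preserves \(W\)-types.

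The only mildly delicate point is the second step: verifying carefully that \(H/I\) inherits a right adjoint from \(R\), and — if one wants the cleanest invocation — that the strong-morphism structure \(\sigma\) on \(H/I\) is the same one implicitly used in the definition of ``preserves \(W\)-types'', so that \(\widetilde{H/I}\) is literally the functor whose preservation of initial objects is at issue. Both are routine: the right adjoint to a base-change-followed-by-pushforward-type functor is standard, and the strong-morphism structure is canonical (built from the coherence isomorphisms of a locally cartesian closed functor), so there is no ambiguity. Everything else is a direct appeal to Proposition~\ref{prop:adjoint-lifting-for-endofunctor} together with the triviality that left adjoints preserve initial objects.
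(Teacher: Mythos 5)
Your proposal is correct and matches the paper's intended argument: the paper simply derives this proposition from Proposition~\ref{prop:adjoint-lifting-for-endofunctor}, exactly as you do. Your only addition is to spell out the routine details the paper leaves implicit, namely that \(H/I\) inherits a right adjoint \(q \mapsto \eta_{I}^{*}(Rq)\) and that the canonical strong-morphism structure is the one used in the definition of preserving \(W\)-types.
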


The class of functors represented by polynomials is closed under
several constructions.

\begin{proposition}
  \label{prop:polynomial-composition}
  Let \(\E\) be a locally cartesian closed category.
  \begin{enumerate}
  \item For any object \(I \in \E\), the identity functor \(\E/I \to
    \E/I\) belongs to \(\Poly(I, I)\).
  \item For functors \(P : \E/I \to \E/J\) and \(Q : \E/J \to \E/K\),
    if \(P\) belongs to \(\Poly(I, J)\) and \(Q\) belongs to
    \(\Poly(J, K)\), then \(QP\) belongs to \(\Poly(I, K)\).
  \end{enumerate}
\end{proposition}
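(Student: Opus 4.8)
The first part is immediate: the endopolynomial given by $\mathrm{id}_{I} : I \to I$ in all three slots represents the identity functor on $\E/I$, since $\mathrm{id}_{I}^{*}$, $(\mathrm{id}_{I})_{*}$ and $(\mathrm{id}_{I})_{!}$ are each canonically isomorphic to the identity functor.

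For the second part I would first reduce to the case $P = P_{f, g, h}$ for a polynomial with legs $h : A \to I$, $g : A \to B$, $f : B \to J$, and $Q = P_{f', g', h'}$ for a polynomial with legs $h' : A' \to J$, $g' : A' \to B'$, $f' : B' \to K$, so that $QP \cong f'_{!}\, g'_{*}\, (h')^{*}\, f_{!}\, g_{*}\, h^{*}$. The plan is then to rewrite this six-fold composite as a single string of the form $(\cdot)_{!}\,(\cdot)_{*}\,(\cdot)^{*}$ by commuting adjacent functors past one another. Three standard facts about a locally cartesian closed category are used: pseudofunctoriality of $(\cdot)_{!}$, $(\cdot)_{*}$ and $(\cdot)^{*}$, so that composites of like functors arising from composable morphisms collapse; the Beck--Chevalley isomorphisms $u^{*} v_{!} \cong v'_{!} (u')^{*}$ and $u^{*} v_{*} \cong v'_{*} (u')^{*}$ attached to a pullback square with horizontal edges $u, u'$ and vertical edges $v, v'$; and the distributivity law, which for a composable pair $p : C \to A'$ and $g' : A' \to B'$ produces a ``distributivity diagram'' with legs $\varepsilon : E \to C$, $e : E \to D$, $d : D \to B'$ together with an isomorphism $g'_{*}\, p_{!} \cong d_{!}\, e_{*}\, \varepsilon^{*}$.

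Carrying this out, I would: (i) commute $(h')^{*}$ past $f_{!}$ by forming the pullback of $f$ along $h'$, obtaining $(h')^{*} f_{!} \cong (f_{1})_{!} (h_{1})^{*}$ with $f_{1} : B_{1} \to A'$ and $h_{1} : B_{1} \to B$; (ii) commute $(h_{1})^{*}$ past $g_{*}$ by forming the pullback of $g$ along $h_{1}$, obtaining $(h_{1})^{*} g_{*} \cong (g_{1})_{*} (h_{2})^{*}$, and absorb $(h_{2})^{*} h^{*}$ into $(h \circ h_{2})^{*}$, so that now $QP \cong f'_{!}\, g'_{*}\, (f_{1})_{!}\, (g_{1})_{*}\, (h \circ h_{2})^{*}$; (iii) apply the distributivity law to the core $g'_{*} (f_{1})_{!}$, replacing it by $d_{!}\, e_{*}\, \varepsilon^{*}$ for the distributivity diagram of $f_{1}$ and $g'$; (iv) commute $\varepsilon^{*}$ past $(g_{1})_{*}$ by one more pullback, obtaining $\varepsilon^{*} (g_{1})_{*} \cong (g_{2})_{*} (e_{3})^{*}$; and finally collapse the remaining composites of like functors by pseudofunctoriality. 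The outcome is that $QP$ is isomorphic to $P_{f' \circ d,\ e \circ g_{2},\ h \circ h_{2} \circ e_{3}}$, a polynomial from $I$ to $K$, whence $QP \in \Poly(I, K)$.

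I expect the distributivity step to be the only real content. The Beck--Chevalley commutations and the collapsing of like composites are formal, whereas distributivity genuinely uses the pushforwards and requires setting up its diagram, and checking its defining property, with some care. Everything else is bookkeeping: tracking the tower of iterated pullbacks and checking that all isomorphisms invoked are the canonical mates, so that the final identification of $QP$ with a single polynomial functor is natural. For the precise construction of the distributivity pullback and the verification of the distributivity law in a locally cartesian closed category I would refer to the theory of polynomial functors developed by Gambino and Hyland \cite{gambino2004wellfounded}.
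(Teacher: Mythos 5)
Your proposal is correct and matches the paper's treatment: part 1 is handled identically via the polynomial \((1_{I}, 1_{I}, 1_{I})\), and for part 2 the paper simply cites Gambino--Kock, whose proof of closure under composition is exactly the Beck--Chevalley-plus-distributivity argument you sketch (your final polynomial \(P_{f'\circ d,\ e\circ g_{2},\ h\circ h_{2}\circ e_{3}}\) is the standard composite).
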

\begin{proof}
  The identity functor \(\E/I \to \E/I\) is represented by the
  polynomial \((1_{I}, 1_{I}, 1_{I})\). For the composition, see
  \cite{gambino-kock2013polynomial}.
\end{proof}

\begin{proposition}
  \label{prop:polynomial-slice}
  Let \(\E\) be a locally cartesian closed category and
  \(P : \E/I \to \E/J\) a functor. If \(P\) belongs to
  \(\Poly(I, J)\), then \(P/K\) belongs to \(\Poly(K, PK)\) for any
  object \(K \in \E/I\).
\end{proposition}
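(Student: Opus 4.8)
The plan is to reduce to the three elementary cases \(P = h^{*}\), \(P = g_{*}\) and \(P = f_{!}\), exploiting that slicing is compatible with composition of functors. Write \(K = (k : \bar{K} \to I)\) and let \(\overline{PK}\) denote the domain of \(PK\) as a morphism to \(J\); under the canonical equivalences \((\E/I)/K \simeq \E/\bar{K}\) and \((\E/J)/PK \simeq \E/\overline{PK}\) the functor \(P/K\) corresponds to a functor \(\E/\bar{K} \to \E/\overline{PK}\), and this is what must be shown to lie in \(\Poly(\bar{K}, \overline{PK})\). Since \(P \in \Poly(I, J)\) we may take \(P = P_{f,g,h} = f_{!}g_{*}h^{*}\), the composite of \(h^{*} \in \Poly(I, A)\), \(g_{*} \in \Poly(A, B)\) and \(f_{!} \in \Poly(B, J)\); and for composable \(P' : \E/I \to \E/J\), \(Q : \E/J \to \E/L\) and any \(K\), one has \((QP')/K \cong (Q/P'K) \circ (P'/K)\) after the canonical identifications. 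By Proposition \ref{prop:polynomial-composition}(2) it therefore suffices to show that each of \(h^{*}\), \(g_{*}\) and \(f_{!}\) gives a polynomial functor when sliced over an arbitrary object of the appropriate slice category. (Minor bookkeeping: replacing \(P\) by the isomorphic \(P_{f,g,h}\) and transporting along the equivalences changes \(P/K\) only by composition with functors of the form \(\theta_{!}\) or \(\theta^{*}\) for isomorphisms \(\theta\), which lie in \(\Poly\) by Proposition \ref{prop:polynomial-composition}.)

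The cases \(f_{!}\) and \(h^{*}\) are immediate. If \(f : B \to J\) and \(K \in \E/B\) then \(\overline{f_{!}K} = \bar{K}\) and \(f_{!}/K : \E/\bar{K} \to \E/\bar{K}\) is isomorphic to the identity functor, which belongs to \(\Poly\) by Proposition \ref{prop:polynomial-composition}(1). If \(h : A \to I\) and \(K \in \E/I\) then \(\overline{h^{*}K} = A \times_{I} \bar{K}\) and \(h^{*}/K\) is isomorphic to pullback along the projection \(\pi : A \times_{I} \bar{K} \to \bar{K}\), which is represented by the polynomial \((1, 1, \pi)\).

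The pushforward is the substantial case, and I expect the real work to lie there. Let \(g : A \to B\) and \(K = (q : \bar{K} \to A) \in \E/A\); put \(g_{*}K = (p : D \to B)\), form the pullback \(E := A \times_{B} D\) with projection \(\sigma : E \to D\), and let \(\epsilon : E \to \bar{K}\) be the underlying morphism of the counit of \(g^{*} \adj g_{*}\) at \(K\), which lies over \(A\). I claim that, under the identifications above, \(g_{*}/K \cong \sigma_{*}\epsilon^{*}\), so that it is represented by the polynomial
\[
  \begin{tikzcd}
    \bar{K} & E \arrow[l,"\epsilon"'] \arrow[r,"\sigma"] & D \arrow[r,"1_{D}"] & D.
  \end{tikzcd}
\]
To see this, note that \(g_{*}/K\) sends \(X \in \E/\bar{K}\) with structure map \(x : X \to \bar{K}\) to \(g_{*}X\) equipped with \(g_{*}x : g_{*}X \to D\). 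For \(Y \in \E/D\), the counit identifies \(\mathrm{Hom}_{\E/D}(Y, g_{*}X)\) with the set of morphisms \(g^{*}Y \to X\) over \(\bar{K}\), where \(g^{*}Y\) carries its canonical map to \(\bar{K}\) through \(E\); and \(g^{*}Y\), viewed over \(E\) via the map induced by \(Y \to D\), is \(\sigma^{*}Y\) by pullback pasting. Hence
\[
  \mathrm{Hom}_{\E/D}(Y, g_{*}X) \cong \mathrm{Hom}_{\E/\bar{K}}(g^{*}Y, X) \cong \mathrm{Hom}_{\E/E}(\sigma^{*}Y, \epsilon^{*}X) \cong \mathrm{Hom}_{\E/D}(Y, \sigma_{*}\epsilon^{*}X)
\]
naturally in \(X\) and \(Y\), which proves the claim. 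One could instead observe that this last step is an instance of the distributivity law underlying Proposition \ref{prop:polynomial-composition} and deduce the whole statement directly from \cite{gambino-kock2013polynomial}.
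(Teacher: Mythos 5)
Your argument is correct, but it takes a more modular route than the paper's one-shot construction. The paper slices the composite \(f_{!}g_{*}h^{*}\) all at once: it simply exhibits the representing polynomial \((\eta, g', h'\varepsilon)\), with middle object \(g^{*}g_{*}h^{*}K\), pullback projection \(g'\), counit \(\varepsilon\) of \(g^{*} \adj g_{*}\), and the canonical isomorphism \(\eta : g_{*}h^{*}K \to PK\), leaving the verification implicit. You instead use that slicing commutes with composition of functors and that \(\Poly\) is closed under composition (Proposition~\ref{prop:polynomial-composition}) to reduce to the generators \(h^{*}\), \(g_{*}\), \(f_{!}\), and you prove the only substantial case, \(g_{*}/K \cong \sigma_{*}\epsilon^{*}\), by an adjunction-and-Yoneda computation; that computation is precisely the verification the paper omits. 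Unfolding your composite recovers the paper's polynomial on the nose: \(f_{!}/(g_{*}h^{*}K)\) contributes the isomorphism \(\eta\), your \(\sigma\) at \(h^{*}K\) is the paper's \(g'\), your \(\pi\) is the paper's \(h'\), and \(\epsilon^{*}\pi^{*} \cong (h'\varepsilon)^{*}\). So the mathematical content coincides; your version buys an explicit proof of representability plus reusable descriptions of the sliced generators, at the cost of invoking the composition theorem behind Proposition~\ref{prop:polynomial-composition}, which is heavier than needed here since, as the unfolding shows, the composite simplifies directly to a single polynomial. Two minor points: the functor \(\theta_{!}\) induced by an isomorphism \(\theta\) lies in \(\Poly\) because it equals \(P_{\theta, 1, 1}\), not by appeal to Proposition~\ref{prop:polynomial-composition}; and the paper, like you, silently identifies \((\cat{E}/I)/K\) and \((\cat{E}/J)/PK\) with slices of \(\cat{E}\) over the underlying objects, so your bookkeeping remark is consistent with its conventions.
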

\begin{proof}
  Suppose that \(P\) is represented by a polynomial
  \begin{tikzcd}
    I &
    A \arrow[l,"h"'] \arrow[r,"g"] &
    B \arrow[r,"f"] &
    J.
  \end{tikzcd}
  Consider the following diagram,
  \[
    \begin{tikzcd}
      & g^{*}g_{*}h^{*}K \arrow[r,"g'"]
      \arrow[d,"\varepsilon"']
      \arrow[ddr,phantom,"\lrcorner"{very near start,description}] &
      g_{*}h^{*}K \arrow[r,"\eta","\sim"']
      \arrow[dd] &
      PK \arrow[dd] \\
      K \arrow[d] &
      h^{*}K \arrow[d]
      \arrow[l,"h'"']
      \arrow[dl,phantom,"\llcorner"{very near start,description}] & & \\
      I &
      X \arrow[l,"h"] \arrow[r,"g"'] &
      Y \arrow[r,"f"'] &
      J
    \end{tikzcd}
  \]
  where the left and middle squares are pullbacks, \(\varepsilon\) is
  the counit of the adjunction \(g^{*} \adj g_{*}\), and
  \(\eta : g_{*}h^{*}K \to PK\) is an isomorphism. Then the functor
  \(P/K : \E/K \to \E/PK\) is represented by the polynomial
  \((\eta, g', h'\varepsilon)\).
\end{proof}

We introduce the notion of \emph{class of polynomial functors} which
is a generalization of the family of classes of functors
\((\Poly_{\E}(I, J))_{I, J \in \E}\). This generalization is necessary
to apply Theorem \ref{thm:w-types-of-coalgebras} on the existence of
\(W\)-types for a wide range of categorical constructions (see Section
\ref{sec:appl-intern-presh} and \ref{sec:application:-gluing}).

\begin{definition}
  Let \(\E\) be a locally cartesian closed category with
  \(W\)-types. A \emph{class of polynomial functors} consists of a class
  \(\PP(I, J)\) of functors \(\E/I \to \E/J\) for each pair of
  objects \(I, J \in \E\) satisfying the following conditions.
  \begin{enumerate}
  \item If \(P : \E/I \to \E/J\) belongs to \(\Poly(I, J)\), then it
    also belongs to \(\PP(I, J)\).
  \item If \(P : \E/I \to \E/J\) belongs to \(\PP(I, J)\) and \(Q :
    \E/J \to \E/K\) belongs to \(\PP(J, K)\), then \(QP\) belongs to
    \(\PP(I, K)\).
  \item If \(P : \E/I \to \E/J\) belongs to \(\PP(I, J)\) and \(P \cong
    Q\), then \(Q\) belongs to \(\PP(I, J)\).
  \item If \(P : \E/I \to \E/J\) belongs to \(\PP(I, J)\), then \(P/K :
    \E/K \to \E/PK\) belongs to \(\PP(K, PK)\) for any object \(K \in
    \E/I\).
  \item For any endofunctor \(P : \E/I \to \E/I\) that belongs to
    \(\PP(I, I)\), there exists an initial \(P\)-algebra.
  \end{enumerate}
\end{definition}

\begin{example}
  For a locally cartesian closed category \(\E\) with \(W\)-types,
  the classes \((\Poly(I, J))_{I, J \in \E}\) form a class of
  polynomial functors by Proposition \ref{prop:polynomial-composition}
  and \ref{prop:polynomial-slice}.
\end{example}

\begin{example}
  Let \(\PP\) be a class of polynomial functors on a locally cartesian
  closed category \(\E\) with \(W\)-types. For an object
  \(K \in \E\) and objects \(I, J \in \E/K\), we define a class
  \(\PP/K(I, J)\) of functors \((\E/K)/I \to (\E/K)/J\) to be the
  class of functors that belong to \(\PP(I, J)\) under the
  identification \((\E/K)/I \cong \E/I\) and \((\E/K)/J \cong
  \E/J\). Then \((\PP/K(I, J))_{I, J \in \E/K}\) form a class of
  polynomial functors on \(\E/K\).
\end{example}

\begin{remark}
  Clearly \(\Poly_{\E/K} \subset \Poly_{\E}/K\) holds, but
  \(\Poly_{\E}/K \subset \Poly_{\E/K}\) need not hold.
\end{remark}

\subsection{The Characterization Theorem}
\label{sec:char-theor}
\begingroup

In some nice category \(\E\), \(W\)-types have a nice characterization
due to van den Berg \cite{vandenberg2005inductive}. To achieve this
characterization, the category \(\E\) should be extensive
\cite{carboni-lack-walters1993extensive}.

\begin{definition}
  An \emph{extensive category} is a category \(\E\) with finite
  coproduct such that, for any objects \(A, B \in \E\), the coproduct
  functor
  \[
    + : \E/A \times \E/B \to \E/(A + B)
  \]
  is an equivalence of categories.
\end{definition}

When \(\E\) is locally cartesian closed, the extensivity of \(\E\) is
equivalent to much simpler conditions.

\begin{definition}
  Let \(\E\) be a category with finite coproducts. We say a binary
  coproduct \(A + B\) in \(\E\) is \emph{disjoint} if the inclusions
  \(A \to A + B\) and \(B \to A + B\) are monic and the pullback of
  these inclusions is the initial object.
\end{definition}

\begin{proposition}
  \label{prop:extensive-lccc}
  Let \(\E\) be a locally cartesian closed category with finite
  coproducts. The following conditions are equivalent.
  \begin{enumerate}
  \item \label{item:1}
    \(\E\) is extensive.
  \item \label{item:2}
    Binary coproducts in \(\E\) are disjoint.
  \item \label{item:3}
    The binary coproduct \(1 + 1\) is disjoint.
  \end{enumerate}
\end{proposition}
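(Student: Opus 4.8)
The plan is to prove the cycle $(\ref{item:1}) \Rightarrow (\ref{item:2}) \Rightarrow (\ref{item:3}) \Rightarrow (\ref{item:1})$. The middle implication is trivial, as $1+1$ is a binary coproduct. For $(\ref{item:1}) \Rightarrow (\ref{item:2})$ I would first recall that a locally cartesian closed category has a strict initial object $0$: for any $X$, the functor $X \times (-)$ has a right adjoint and hence preserves the initial object, so $X \times 0 \cong 0$, whence $1_X$ factors through $0$ and $X \cong 0$; in particular $0 \to B$ is monic for every $B$. Under the equivalence $\E/A \times \E/B \simeq \E/(A+B)$, the inclusion $\mathrm{in}_A : A \to A+B$ corresponds to the pair $(1_A,\, 0 \to B)$ and $\mathrm{in}_B$ to $(0 \to A,\, 1_B)$. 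Monomorphisms in a product category, and in slice categories, are detected componentwise, and equivalences preserve and reflect them, so both inclusions are monic; and the pullback $A \times_{A+B} B$, being the product of $\mathrm{in}_A$ and $\mathrm{in}_B$ in $\E/(A+B)$, corresponds to $(1_A, 0\to B) \times (0\to A, 1_B) \cong (0 \to A,\, 0 \to B)$, i.e. to the initial object $0 \to A+B$.

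The real content is $(\ref{item:3}) \Rightarrow (\ref{item:1})$, which I would carry out in three steps. First, coproducts in $\E$ are universal (stable under pullback): in $\E/(A+B)$ the identity $1_{A+B}$ is the coproduct of $\mathrm{in}_A$ and $\mathrm{in}_B$, and for any $f : Z \to A+B$ the pullback functor $f^{*} : \E/(A+B) \to \E/Z$ has the pushforward $f_{*}$ as a right adjoint, hence preserves coproducts; applying $f^{*}$ yields $Z \cong (Z\times_{A+B}A) + (Z\times_{A+B}B)$ over $A+B$. Second, I would upgrade the disjointness of $1+1$ to disjointness of an arbitrary binary coproduct $A+B$: taking $q = {!_A}+{!_B} : A+B \to 1+1$, a diagram chase using the first step identifies $\mathrm{in}_A$ with the pullback of $\mathrm{in}_1 : 1 \to 1+1$ along $q$ and $\mathrm{in}_B$ with the pullback of $\mathrm{in}_2$. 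Since $q^{*}$ is both a left and a right adjoint it preserves monomorphisms, finite limits, and the initial object; so from $\mathrm{in}_1, \mathrm{in}_2$ monic and $1\times_{1+1}1 \cong 0$ we deduce that $\mathrm{in}_A, \mathrm{in}_B$ are monic and that $A\times_{A+B}B \cong q^{*}(1\times_{1+1}1) \cong q^{*}0 \cong 0$.

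Third, with universality and disjointness of all binary coproducts available, I would show that $+ : \E/A \times \E/B \to \E/(A+B)$ is an equivalence by exhibiting the quasi-inverse $R$ sending an object $p : X \to A+B$ to $(\mathrm{in}_A^{*}p,\, \mathrm{in}_B^{*}p) = (X\times_{A+B}A \to A,\ X\times_{A+B}B \to B)$. The isomorphism $+ \circ R \cong 1$ is precisely universality applied to $p$. For $R \circ + \cong 1$, given $(Y \to A,\, Z \to B)$ one computes $(Y+Z)\times_{A+B}A$: universality rewrites it as $(Y\times_{A+B}A) + (Z\times_{A+B}A)$, and since $Y \to A+B$ factors through the monic $\mathrm{in}_A$ one gets $Y\times_{A+B}A \cong Y$, while $Z\times_{A+B}A \cong Z\times_B(B\times_{A+B}A) \cong Z\times_B 0 \cong 0$ by disjointness and strictness of $0$; hence $(Y+Z)\times_{A+B}A \cong Y$, and symmetrically $(Y+Z)\times_{A+B}B \cong Z$. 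I expect the main obstacle to be the second step — pinning down that $\mathrm{in}_A$ genuinely is the pullback $q^{*}\mathrm{in}_1$ — and, throughout the third step, checking that the isomorphisms so produced assemble into coherent natural isomorphisms of functors; these are bookkeeping-heavy diagram chases rather than conceptually deep points.
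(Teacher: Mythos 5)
Your proposal is correct, but it takes a genuinely more self-contained route than the paper. The paper outsources the structural work to Carboni--Lack--Walters: it quotes their Proposition 2.14 (``extensive \(\Leftrightarrow\) coproducts universal and disjoint''), notes that universality is automatic in a locally cartesian closed category because pullback functors have right adjoints, and for \ref{item:3}\(\Rightarrow\)\ref{item:2} uses their Proposition 2.13 to reduce disjointness of \(A+B\) to the single condition \(A\times_{A+B}B\cong 0\), which it then gets in one line from the evident map \(A\times_{A+B}B\to 1\times_{1+1}1\cong 0\) together with strictness of the initial object in a cartesian closed category. You instead reprove both cited facts: your third step is essentially the proof of the hard direction of CLW 2.14 (constructing the quasi-inverse to \(+\) from universality and disjointness), and your second step replaces the paper's ``map into \(0\) plus strictness'' trick by identifying \(\mathrm{in}_A\) with the pullback of \(\mathrm{in}_1\) along \(q = {!_A}+{!_B}\), which also yields monicity of the injections directly rather than via CLW 2.13. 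Both routes are sound; yours buys a citation-free proof (and your step-2 chase does go through: pulling the decomposition \(A+B\) back along the projection of \(q^{*}\mathrm{in}_1\), the \(A\)-summand is \(A\) because \(\mathrm{in}_1\) is monic by hypothesis \ref{item:3}, and the \(B\)-summand dies by \(1\times_{1+1}1\cong 0\) and strictness --- exactly parallel to your step-3 computation), while the paper's buys brevity, and in particular its strictness argument sidesteps the need to pin down the pullback \(q^{*}\mathrm{in}_1\) on the nose. Two small points of hygiene: in your \ref{item:1}\(\Rightarrow\)\ref{item:2} the strictness argument should be phrased as ``given a map \(X\to 0\), then \(X\cong 0\)'' (as written it reads as if every \(X\) were initial), and note that your appeal to ``\(q^{*}\) preserves monomorphisms'' is not really needed once \(\mathrm{in}_A\cong q^{*}\mathrm{in}_1\) is established, since monos are stable under pullback.
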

\begin{proof}
  It is known that a category with finite coproducts is extensive if
  and only if binary coproducts are universal and disjoint
  \cite[Proposition 2.14]{carboni-lack-walters1993extensive}. Binary
  coproducts in a locally cartesian closed category \(\E\) are always
  universal because pullback functors have right adjoints. Hence the
  conditions \ref{item:1} and \ref{item:2} are equivalent. Trivially
  \ref{item:2} implies \ref{item:3}. Suppose that the coproduct \(1 +
  1\) is disjoint. To show that any binary coproduct \(A + B\) is
  disjoint, it suffices to show that the pullback of the inclusions
  \(A \to A + B\) and \(B \to A + B\) is the initial object
  \cite[Proposition 2.13]{carboni-lack-walters1993extensive} because
  binary coproducts are universal. Now we have a morphism \(A
  \times_{A + B} B \to 1 \times_{1 + 1} 1 \cong 0\). Since \(\E\) is
  cartesian closed, the initial object is strict
  \cite[A1.5.12]{johnstone2002sketches}. Hence \(A \times_{A + B} B\)
  is the initial object.
\end{proof}

\begin{definition}
  Let \(\E\) be a category. We say an object \(A \in \E\) is
  \emph{well-founded} if any monomorphism \(X \rightarrowtail A\) into
  \(A\) is an isomorphism.
\end{definition}

\begin{definition}
  Let \(P : \cat{E} \to \cat{E}\) be an endofunctor on a category. A
  \emph{fixed point of \(P\)} is a \(P\)-algebra \((X, s)\) such that
  \(s : P X \to X\) is an isomorphism.
\end{definition}

\begin{theorem}[Characterization Theorem]
  \label{thm:w-types-characterization}
  Let \(\E\) be an extensive locally cartesian closed category with a
  natural number object and
  \begin{tikzcd}
    I &
    A \arrow[l,"h"']
    \arrow[r,"g"] &
    B \arrow[r,"f"] &
    I
  \end{tikzcd}
  an endopolynomial in \(\E\). For a \(P_{f, g, h}\)-algebra
  \((X, s_{X})\), the following conditions are equivalent.
  \begin{enumerate}
  \item \label{item:4}
    \((X, s_{X})\) is an initial \(P_{f, g, h}\)-algebra.
  \item \label{item:5}
    \((X, s_{X})\) is a well-founded fixed point of \(P_{f, g, h}\),
    namely a fixed point of \(P_{f, g, h}\) that is well-founded in
    the category \(\Alg(P_{f, g, h})\).
  \end{enumerate}
\end{theorem}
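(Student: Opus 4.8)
The plan is to prove the two implications separately (write \(P = P_{f, g, h}\) throughout). The direction \ref{item:4} \(\Rightarrow\) \ref{item:5} is routine, while \ref{item:5} \(\Rightarrow\) \ref{item:4} is the heart of the matter, and for it I would ultimately follow van den Berg \cite{vandenberg2005inductive}.

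For \ref{item:4} \(\Rightarrow\) \ref{item:5}: Lambek's lemma shows that an initial \(P\)-algebra is a fixed point --- the unique algebra morphism \(h\) from \((X, s_{X})\) to \((P X, P s_{X})\) is a two-sided inverse of \(s_{X}\), by initiality and functoriality of \(P\). For well-foundedness, given a monomorphism \(m : (Y, s_{Y}) \rightarrowtail (X, s_{X})\) in \(\Alg(P)\), initiality provides \(k : (X, s_{X}) \to (Y, s_{Y})\) with \(m \circ k = \mathrm{id}\), and then \(k \circ m = \mathrm{id}\) since \(m\) is monic, so \(m\) is an isomorphism. This direction uses neither extensivity nor the natural number object.

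For \ref{item:5} \(\Rightarrow\) \ref{item:4}: The forgetful functor \(U_{P} : \Alg(P) \to \cat{E}\) creates limits, so equalizers of parallel algebra morphisms exist in \(\Alg(P)\); since their inclusions are monomorphisms, well-foundedness of \((X, s_{X})\) forces them to be isomorphisms. Hence any two algebra morphisms \((X, s_{X}) \to (Z, s_{Z})\) are equal, and the task reduces to constructing, for each \(P\)-algebra \((Z, s_{Z})\), one algebra morphism \((X, s_{X}) \to (Z, s_{Z})\). I would construct it by well-founded recursion over \(X\): reading \(s_{X}^{-1}\) as a \(P\)-coalgebra structure presents every element of \(X\) as a node carrying a \(B\)-label over its index in \(I\) together with a family of immediate predecessors indexed by a fibre of \(g\), and well-foundedness of \((X, s_{X})\) in \(\Alg(P)\) is precisely the induction principle that a subobject of \(X\) underlying a subalgebra --- that is, closed under forming nodes from their predecessors --- must be all of \(X\). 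Applying this to the subobject of elements at which the recursion admits a (necessarily unique, once it exists) local solution shows the recursion is everywhere defined, and the local solutions patch to the required morphism \((X, s_{X}) \to (Z, s_{Z})\).

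The main obstacle is this last construction, and it is exactly where the hypotheses are needed: one must carry out well-founded recursion inside a bare extensive locally cartesian closed category with a natural number object, with no subobject classifier and no choice. Formulating ``the recursion is defined at \(x\)'' without circularity requires handling the hereditary predecessors of a node; the possibly infinite branching (the fibres of \(g\)) rules out naive approximation by iterating \(\omega\) times; and patching partial solutions needs the extensivity of \(\cat{E}\), both to reduce statements about \(P\)-algebras to fibrewise statements over \(I\) and to make the zero-or-successor case analysis for the natural number object well behaved. This is the analysis carried out in \cite{vandenberg2005inductive}, whose theorem I would invoke after checking that, for a fixed point, his notion of well-foundedness agrees with being well-founded as an object of \(\Alg(P)\).
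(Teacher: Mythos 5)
Your proposal is correct and takes essentially the same route as the paper, whose entire proof is a deferral to van den Berg's Theorem 26 in \cite{vandenberg2005inductive}: you additionally spell out the routine Lambek/retraction argument for \ref{item:4} \(\Rightarrow\) \ref{item:5} and the equalizer reduction of \ref{item:5} \(\Rightarrow\) \ref{item:4} to weak initiality, but the substantive well-founded-recursion construction is, as in the paper, delegated to van den Berg's analysis. Nothing further is needed beyond the compatibility check of the two notions of well-foundedness that you already flag.
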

\begin{proof}
  The proof is similar to \cite[Theorem 26]{vandenberg2005inductive}.
\end{proof}
\endgroup
\endgroup

\section{Comonads}
\label{sec:comonads}
\begingroup

We review the theory of (cartesian) comonads.

\begin{notation}
  Let \(G\) be a comonad on a category \(\cat{E}\). We will refer to
  the counit and comultiplication of \(G\) as
  \(\varepsilon_{G} : G \To 1\) and \(\delta_{G} : G \To GG\)
  respectively. We denote by \(\cat{E}_{G}\) the category of
  \(G\)-coalgebras, by \(U_{G} : \cat{E}_{G} \to \cat{E}\) the
  forgetful functor and by \(F_{G}\) the right adjoint to \(U_{G}\)
  such that \(U_{G}F_{G} = G\). We will refer to the unit of the
  adjunction \(U_{G} \adj F_{G}\) as \(\eta_{G} : 1 \To F_{G}U_{G}\).
\end{notation}

\begin{definition}
  Let \(\cat{E}\) be a cartesian category. A \emph{cartesian
    comonad} on \(\cat{E}\) is a comonad \(G\) on \(\cat{E}\) whose
  underlying functor \(\cat{E} \to \cat{E}\) preserves finite limits.
\end{definition}

The following Lemma \ref{lem:coalgebra-create-colimits} is
well-known.

\begin{lemma}
  \label{lem:coalgebra-create-colimits}
  Let \(G\) be a comonad on a category \(\cat{E}\).
  \begin{enumerate}
  \item \label{item:6}
    The forgetful functor \(U_{G} : \cat{E}_{G} \to \cat{E}\) creates
    colimits.
  \item \label{item:7}
    If \(G\) is cartesian, then \(U_{G} : \cat{E}_{G} \to \cat{E}\)
    creates finite limits.
  \end{enumerate}
\end{lemma}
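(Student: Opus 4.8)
The plan is to prove both parts by the standard argument that a forgetful functor along a (co)free adjunction detects colimits (resp.\ limits) when the (co)monad interacts suitably with those (co)limits. The key observation is that a $G$-coalgebra is an object $X$ together with a map $X \to GX$ satisfying coassociativity and counit laws, and a morphism of coalgebras is just an underlying morphism compatible with these structure maps; so a cocone (resp.\ cone) in $\cat{E}_G$ is exactly an underlying cocone (resp.\ cone) in $\cat{E}$ whose vertex carries a compatible coalgebra structure.

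For part \ref{item:6}, let $D : \cat{J} \to \cat{E}_G$ be a diagram and suppose $U_G D$ has a colimit $(C, (\iota_j : U_G D_j \to C)_j)$ in $\cat{E}$. I would first exhibit a canonical coalgebra structure on $C$: the maps $D_j \to GU_G D_j$ followed by $G\iota_j$ assemble into a cocone on $U_G D$ with vertex $GC$ (using that the structure maps are natural in $j$), hence factor uniquely through a map $\gamma : C \to GC$. Then I would check that $\gamma$ satisfies the counit and coassociativity laws — this reduces to the universal property of $C$ as a colimit, since both sides of each law are cocone maps agreeing after precomposition with every $\iota_j$. Finally I would verify that each $\iota_j$ becomes a coalgebra morphism and that $(C, \gamma)$ with these injections is a colimit in $\cat{E}_G$, and that any lift of the underlying colimit is forced to be this one — establishing creation, not merely preservation.

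For part \ref{item:7}, the argument is formally dual but now uses that $G$ preserves finite limits: given a finite diagram $D : \cat{J} \to \cat{E}_G$ whose underlying diagram has a limit $(L, (\pi_j)_j)$ in $\cat{E}$, the maps $G\pi_j$ compose with the structure maps $D_j \to GU_G D_j$ to give a cone on $U_G D$ with vertex $GL$; since $G$ preserves this finite limit, $GL$ \emph{is} the limit of $G U_G D$, so this cone factors uniquely through a map $\lambda : L \to GL$. Counit and coassociativity again follow from the universal property, and one checks the $\pi_j$ are coalgebra morphisms and that $(L,\lambda)$ is the limit in $\cat{E}_G$.

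I expect the main subtlety — though it is bookkeeping rather than a genuine obstacle — to be organizing the naturality conditions so that the relevant diagrams actually are cocones (resp.\ cones) on $U_G D$, and then invoking uniqueness from the universal property cleanly; in part \ref{item:7} the one essential use of the cartesian hypothesis is precisely that $GL = \lim GU_G D$, which is what lets the structure map $\lambda$ exist. Since both statements are described as well-known, I would likely just sketch part \ref{item:6} and remark that part \ref{item:7} is dual using that $G$ preserves finite limits, or cite a standard reference such as \cite{johnstone1975adjoint}.
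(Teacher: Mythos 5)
Your proposal is correct, and since the paper records this lemma as well-known without giving any proof, your argument is precisely the standard creation argument it implicitly relies on, with the single essential use of the cartesian hypothesis correctly located in the existence of the structure map \(\lambda : L \to GL\). Two harmless points to tighten: in part \ref{item:7} the cone that factors is \((\alpha_j \circ \pi_j : L \to G U_G D_j)_j\) with vertex \(L\) on the diagram \(G U_G D\) (not a cone on \(U_G D\) with vertex \(GL\)), and the coassociativity check compares two maps \(L \to GGL\), so it also uses that \(GG\) preserves the finite limit, which again follows from \(G\) being cartesian.
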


\begin{proposition}
  \label{prop:coalgebra-lccc}
  Let \(G\) be a cartesian comonad on a cartesian category
  \(\cat{E}\). If \(\cat{E}\) is locally cartesian closed, then so is
  \(\cat{E}_{G}\).
\end{proposition}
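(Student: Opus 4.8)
The plan is to exhibit, for each $G$-coalgebra $(X, x)$, a locally cartesian closed structure on the slice $\cat{E}_{G}/(X, x)$. The key observation is that by Lemma \ref{lem:coalgebra-create-colimits}\ref{item:7}, the forgetful functor $U_{G}$ creates finite limits, so in particular it creates pullbacks; hence pullback functors exist in $\cat{E}_{G}$ and are computed as in $\cat{E}$. What remains is to produce right adjoints to these pullback functors, i.e.\ pushforwards. The natural strategy is to transport the pushforwards of $\cat{E}$ along the comonadic adjunction $U_{G} \adj F_{G}$.

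First I would recall that since $G$ is cartesian, the slice comonad construction behaves well: for an object $(X, x) \in \cat{E}_{G}$, the coalgebra structure $x : X \to GX$ together with the fact that $G$ preserves pullbacks makes $\cat{E}_{G}/(X,x)$ comonadic over $\cat{E}/X$ for a suitable cartesian comonad $G_{x}$ on $\cat{E}/X$ (one takes $G_{x}$ to send $p : Y \to X$ to the pullback of $Gp$ along $x$). Thus it suffices to prove the statement in the form: if $\cat{E}$ is locally cartesian closed and $G$ is a cartesian comonad on $\cat{E}$, then $\cat{E}_{G}$ is cartesian closed; applying this to each $G_{x}$ on $\cat{E}/X$ then yields local cartesian closure. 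Alternatively, and perhaps more cleanly, I would directly construct, for a morphism $f : (X,x) \to (Y,y)$ of coalgebras, a right adjoint $f_{*}$ to $f^{*} : \cat{E}_{G}/(Y,y) \to \cat{E}_{G}/(X,x)$.

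For the construction of the pushforward, the idea is as follows. Given $p : (Z, z) \to (X, x)$ in $\cat{E}_{G}/(X,x)$, first forget down to $\cat{E}$ and form the pushforward $f_{*}^{\cat{E}}(U_{G}p)$ along $U_{G}f$ in $\cat{E}$; call its domain $W$. This $W$ does not automatically carry a $G$-coalgebra structure, so one corrects it: using that $F_{G}$ is a right adjoint hence preserves the relevant limits, and that $U_{G}F_{G} = G$ is cartesian, one shows $F_{G}$ applied to the data and an equalizer/pullback adjustment cuts out the "coalgebraic part." Concretely, the cofree coalgebra functor $F_{G}$ preserves pushforwards in a suitable sense because $G$ preserves finite limits, and one builds $f_{*}$ as a composite involving $F_{G}$, the pushforward in $\cat{E}$, and a coreflection. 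The universal property is then checked by a diagram chase using the adjunctions $U_{G} \adj F_{G}$ and $(U_{G}f)^{*} \adj (U_{G}f)_{*}$ in $\cat{E}$ and the fact that a morphism of coalgebras into $F_{G}(-)$ corresponds to a morphism in $\cat{E}$ into $(-)$.

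The main obstacle I expect is precisely endowing the pushforward object with a coalgebra structure and verifying it is genuinely right adjoint to $f^{*}$ on the level of coalgebra morphisms rather than merely on underlying objects: the cartesianness of $G$ is exactly what is needed here, since it guarantees that the comparison maps obtained from $\delta_{G}$ and $\varepsilon_{G}$ interact correctly with the pushforward (which is built from pullbacks and $\Pi$'s), but threading this through requires care. A convenient way to organize this, which I would adopt, is to invoke a general principle: if $G$ is a left exact comonad on $\cat{E}$ and $\cat{E}/X$ has a property expressible by the existence of certain right adjoints to left exact functors, then $\cat{E}_{G}$ inherits it, because $\cat{E}_{G}/(X,x)$ sits as a coreflective-like subcategory and the relevant adjoints lift. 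In this paper's setting, the cleanest route is to note that $U_{G}$ is comonadic and left exact and that pushforwards in $\cat{E}_{G}$ can be obtained by the dual of the standard argument that slices of a locally presentable / well-behaved category are again such; but since the excerpt only assumes the bare facts above, I would carry out the explicit $F_{G}$-and-pushforward construction sketched above and discharge the coalgebra-structure verification using naturality of $\delta_{G}, \varepsilon_{G}$ and left-exactness of $G$.
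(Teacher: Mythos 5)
Your proposal follows essentially the same route as the paper: reduce to the slice comonads \(G_{x}\) on \(\cat{E}/X\) via the identification \(\cat{E}_{G}/(X,x) \cong (\cat{E}/X)_{G_{x}}\) (the paper's \(G_{\alpha}\)), and then lift the pushforwards of \(\cat{E}\) through the comonadic adjunction, which is exactly the adjoint lifting theorem (Proposition \ref{prop:adjoint-lifting}) the paper invokes; your ``\(F_{G}\)-plus-coreflection correction'' is precisely the equalizer of the coreflexive pair \(F_{G_{\beta}}\Pi^{\cat{E}}_{f}U_{G_{\alpha}} \rightrightarrows F_{G_{\beta}}\Pi^{\cat{E}}_{f}G_{\alpha}U_{G_{\alpha}}\) described in Section \ref{sec:comonads}. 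The paper itself only cites \cite[A4.2.1]{johnstone2002sketches} for the statement and then records this construction, so your sketch matches it, modulo making the coreflexive pair explicit (the aside that \(F_{G}\) ``preserves pushforwards'' is not needed and not quite what the lifting argument uses).
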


Proposition \ref{prop:coalgebra-lccc} is a well-known fact
\cite[A4.2.1]{johnstone2002sketches}. We concretely describe the
construction of pushforward functors.

First we recall the adjoint lifting theorem
\cite{johnstone1975adjoint}.

\begin{definition}
  Let \(\cat{E}\) and \(\cat{F}\) be categories, \(G\) a comonad on
  \(\cat{E}\) and \(H\) a comonad on \(\cat{F}\). An \emph{oplax
    morphism} \(G \to H\) consists of a functor \(S : \cat{E} \to
  \cat{F}\) and a natural transformation \(\sigma : SG \To HS\)
  satisfying \(\varepsilon_{H}S \circ \sigma = S\varepsilon_{G}\) and
  \(\delta_{H}S \circ \sigma = H\sigma \circ \sigma G \circ
  S\delta_{G}\).
\end{definition}

\begin{proposition}
  \label{prop:comonad-oplax-morphism-lift}
  Let \(\cat{E}\) and \(\cat{F}\) be categories, \(G\) a comonad on
  \(\cat{E}\), \(H\) a comonad on \(\cat{F}\) and
  \(S : \cat{E} \to \cat{F}\) a functor. A natural transformation
  \(\sigma : SG \To HS\) that makes \(S\) an oplax morphism
  \(G \to H\) bijectively corresponds to a functor
  \(T : \cat{E}_{G} \to \cat{F}_{H}\) such that \(U_{H}T = SU_{G}\).
\end{proposition}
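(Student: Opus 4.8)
The plan is to exhibit the bijection explicitly in both directions and check the required compatibilities. Given a natural transformation \(\sigma : SG \To HS\) making \(S\) an oplax morphism, I want to define \(T : \cat{E}_{G} \to \cat{F}_{H}\) on objects by sending a \(G\)-coalgebra \((X, \theta_{X} : X \to GX)\) to the pair \((SX, \sigma(X) \circ S\theta_{X} : SX \to SGX \to HSX)\). The coalgebra axioms for \((SX, \ldots)\) — counitality \(\varepsilon_{H}(SX) \circ \sigma(X) \circ S\theta_{X} = 1_{SX}\) and coassociativity — follow from the coalgebra axioms of \((X, \theta_{X})\) together with the two oplax-morphism equations \(\varepsilon_{H}S \circ \sigma = S\varepsilon_{G}\) and \(\delta_{H}S \circ \sigma = H\sigma \circ \sigma G \circ S\delta_{G}\); these are routine diagram chases using naturality of \(\sigma\). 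On morphisms, \(T\) sends \(f : (X,\theta_{X}) \to (Y,\theta_{Y})\) to \(Sf\), which is a coalgebra morphism by naturality of \(\sigma\) and functoriality of \(S\). By construction \(U_{H}T(X,\theta_{X}) = SX = SU_{G}(X,\theta_{X})\), so \(U_{H}T = SU_{G}\).

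For the converse, suppose \(T : \cat{E}_{G} \to \cat{F}_{H}\) satisfies \(U_{H}T = SU_{G}\). The idea is to evaluate \(T\) at the cofree coalgebras: for an object \(X \in \cat{E}\), the cofree \(G\)-coalgebra is \(F_{G}X = (GX, \delta_{G}(X))\), and \(T(F_{G}X)\) is an \(H\)-coalgebra whose underlying object is \(U_{H}T(F_{G}X) = SU_{G}F_{G}X = SGX\). Thus \(T(F_{G}X)\) supplies a map \(SGX \to HSGX\) natural in \(X\); precomposing with \(S\delta_{G}\)-type data, or more directly, I recover a candidate \(\sigma\) by taking the \(H\)-coalgebra structure map of \(TF_{G}X\) and composing appropriately — concretely, \(\sigma(X)\) is obtained from the \(H\)-coalgebra \(T F_{G} X\) on \(SGX\) followed by \(H S \varepsilon_{G}(X) : HSGX \to HSX\). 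One then checks that this \(\sigma\) makes \(S\) an oplax morphism, using that \(T\) preserves coalgebra morphisms (in particular the counit and comultiplication maps of the cofree coalgebras are coalgebra morphisms between cofree coalgebras, so \(T\) respects them).

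The two constructions are mutually inverse: starting from \(\sigma\), passing to \(T\), and reading off the induced transformation returns \(\sigma\) by a direct computation with \(\delta_{G}\) and \(\varepsilon_{G}\) satisfying the comonad laws; starting from \(T\) and reconstructing \(T\) from the associated \(\sigma\), the two functors agree on cofree coalgebras by construction, and agree on all coalgebras because every coalgebra \((X,\theta_{X})\) is the equalizer (indeed a split subobject via \(\theta_{X}\)) of a pair of maps between cofree coalgebras, and both \(T\) and the reconstructed functor commute with \(U_{H}\) and send that diagram to the corresponding one. The main obstacle I expect is this last coherence step — verifying that the reconstructed functor agrees with the original \(T\) on \emph{all} coalgebras, not just cofree ones — which requires the standard observation that \(\cat{E}_{G}\) is the category of coalgebras for the comonad \(U_{G}F_{G}\) on \(\cat{E}\) and that any \(G\)-coalgebra is canonically a retract of a cofree one compatibly with \(U_{G}\); everything else is bookkeeping with the comonad and oplax-morphism axioms.
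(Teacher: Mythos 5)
Your proposal is correct and follows essentially the same route as the paper's proof: the forward construction \(T(X,\theta_X)=(SX,\sigma(X)\circ S\theta_X)\) is identical, and your recovery of \(\sigma(X)\) as the structure map of \(TF_GX\) followed by \(HS\varepsilon_G(X)\) is exactly the paper's composite \(HS\varepsilon_G\circ U_H\eta_H TF_G\), since \(U_H\eta_H\) at an \(H\)-coalgebra is its structure map. The mutual-inverse verification you sketch (using that \(\theta_X\) is a coalgebra morphism into the cofree coalgebra and the counit law) is the routine check the paper leaves implicit.
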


\begin{proposition}[Adjoint Lifting Theorem]
  \label{prop:adjoint-lifting}
  Let \(\cat{E}\) and \(\cat{F}\) be cartesian categories, \(G\) a
  cartesian comonad on \(\cat{E}\), \(H\) a cartesian comonad on
  \(\cat{F}\) and \((S, \sigma) : G \to H\) an oplax morphism. Let
  \(T : \cat{E}_{G} \to \cat{F}_{H}\) denote the corresponding functor
  such that \(U_{H} T = S U_{G}\). If \(S : \cat{E} \to \cat{F}\) has
  a right adjoint \(R : \cat{F} \to \cat{E}\) with unit \(i\) and
  counit \(e\), then \(T\) has a right adjoint defined by the
  equalizer of the following coreflexive pair:
  \[
    \begin{tikzcd}
      F_{G} R U_{H}
      \arrow[r,"\varphi_{1}",shift left]
      \arrow[r,"\varphi_{2}"',shift right] &
      F_{G} R H U_{H}
      \arrow[r,"F_{G} R \varepsilon_{H} U_{H}"] &
      [6ex]
      F_{G} R U_{H}
    \end{tikzcd}
  \]
  \(\varphi_{1}\) is the composite
  \[
    \begin{tikzcd}
      F_{G} R U_{H}
      \arrow[r,"\eta_{G} F_{G} R U_{H}"] &
      [6ex]
      F_{G} U_{G} F_{G} R U_{H} = F_{G} G R U_{H}
      \arrow[r,"F_{G} \tilde{\sigma} U_{H}"] &
      [5ex]
      F_{G} R H U_{H}
    \end{tikzcd}
  \]
  where \(\tilde{\sigma} : G R \to R H\) is the mate of \(\sigma\),
  namely the composite
  \[
    \begin{tikzcd}
      G R \arrow[r,"i G R"] &
      R S G R \arrow[r,"R \sigma R"] &
      R H S R \arrow[r,"R H e"] &
      R H;
    \end{tikzcd}
  \]
  \(\varphi_{2}\) is \(F_{G} R U_{H} \eta_{H} : F_{G} R U_{H} \to
  F_{G} R U_{H} F_{H} U_{H} = F_{G} R H U_{H}\).
\end{proposition}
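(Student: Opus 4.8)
The plan is to verify directly that the functor $\tilde{R}$ given by the displayed equalizer is right adjoint to $T$, by exhibiting a bijection $\cat{F}_{H}(TX, Y) \cong \cat{E}_{G}(X, \tilde{R}Y)$ natural in $X \in \cat{E}_{G}$ and $Y \in \cat{F}_{H}$. First of all $\tilde{R}$ is well-defined and functorial: since $G$ is cartesian, $U_{G}$ creates finite limits by Lemma~\ref{lem:coalgebra-create-colimits}(\ref{item:7}), so $\cat{E}_{G}$ has equalizers, and $\tilde{R}$ is functorial because $\varphi_{1}$ and $\varphi_{2}$ are natural transformations.

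Two structural facts drive the argument. First, $S U_{G} = U_{H} T$ is a composite of the left adjoints $U_{G} \adj F_{G}$ and $S \adj R$, hence is left adjoint to $F_{G} R$; combined with $U_{H} \adj F_{H}$ this gives, for every $D \in \cat{F}$, a natural isomorphism $\cat{F}_{H}(TX, F_{H}D) \cong \cat{F}(U_{H}TX, D) \cong \cat{E}_{G}(X, F_{G}RD)$. Second, $\eta_{H}Y : Y \to F_{H}U_{H}Y$ exhibits $Y$ as the equalizer in $\cat{F}_{H}$ of the coreflexive pair
\[
  \eta_{H}F_{H}U_{H}Y, \quad F_{H}U_{H}\eta_{H}Y \ :\ F_{H}U_{H}Y \rightrightarrows F_{H}HU_{H}Y
\]
with common retraction $F_{H}\varepsilon_{H}U_{H}Y$; this is the cofree resolution of $Y$, and its image under $U_{H}$ is a split equalizer in $\cat{F}$, so the limit is created by the comonadic functor $U_{H}$. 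Applying the representable functor $\cat{F}_{H}(TX, -)$ to the second fact and transporting along the first yields
\[
  \cat{F}_{H}(TX, Y) \;\cong\; \mathrm{eq}\bigl( \cat{E}_{G}(X, F_{G}RU_{H}Y) \rightrightarrows \cat{E}_{G}(X, F_{G}RHU_{H}Y) \bigr),
\]
the two maps being postcomposition with morphisms $\psi_{1}, \psi_{2} : F_{G}RU_{H}Y \rightrightarrows F_{G}RHU_{H}Y$ that are the transports of $\eta_{H}F_{H}U_{H}Y$ and $F_{H}U_{H}\eta_{H}Y$, respectively.

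The heart of the proof is to identify $\psi_{1}, \psi_{2}$ with $\varphi_{1}, \varphi_{2}$. For $\psi_{2}$ this is immediate: $F_{H}U_{H}\eta_{H}Y$ is $F_{H}$ applied to the morphism $U_{H}\eta_{H}Y$ of $\cat{F}$, so it transports to $F_{G}R$ applied to $U_{H}\eta_{H}Y$, which is $\varphi_{2} = F_{G}RU_{H}\eta_{H}Y$. Identifying $\psi_{1}$ with $\varphi_{1}$ is the one genuine obstacle: one must unwind the natural isomorphism of the previous paragraph into its three constituent unit/counit bijections, compute $U_{G}\varphi_{1}(Y)$ via $U_{G}F_{G} = G$ and $U_{G}\eta_{G}F_{G} = \delta_{G}$, and then push the result through naturality of $\varepsilon_{G}$, $e$, $i$ and $\sigma$, the comonad identities $\varepsilon_{G}G \circ \delta_{G} = 1 = G\varepsilon_{G} \circ \delta_{G}$, and the triangle identities for $S \adj R$ and $U_{G} \adj F_{G}$; the oplax compatibility $\varepsilon_{H}S \circ \sigma = S\varepsilon_{G}$, packaged into the mate $\tilde{\sigma}$, is exactly what makes $F_{G}R\varepsilon_{H}U_{H}$ a common retraction of $\varphi_{1}$ and $\varphi_{2}$ and what pins $\psi_{1}$ down to $\varphi_{1}$. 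Equivalently, one checks that under $\cat{E}_{G}(X, F_{G}RD) \cong \cat{F}(U_{H}TX, D)$ the map $(\psi_{1})_{*}$ becomes $f \mapsto Hf \circ \gamma_{TX}$ and $(\psi_{2})_{*}$ becomes $f \mapsto \gamma_{Y} \circ f$, where $\gamma_{TX}$ and $\gamma_{Y}$ denote the $H$-coalgebra structure maps; the equalizer of these two maps of sets is exactly the set of $H$-coalgebra morphisms $TX \to Y$.

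Once $\{\psi_{1}, \psi_{2}\} = \{\varphi_{1}, \varphi_{2}\}$ is established, the fact that $\cat{E}_{G}(X, -)$ preserves equalizers turns the display into $\cat{F}_{H}(TX, Y) \cong \cat{E}_{G}(X, \mathrm{eq}(\varphi_{1}(Y), \varphi_{2}(Y))) = \cat{E}_{G}(X, \tilde{R}Y)$, and naturality in $X$ and $Y$ is clear from the construction; hence $T \adj \tilde{R}$. I expect the only real difficulty to be the bookkeeping in the identification of $\psi_{1}$, everything else being formal.
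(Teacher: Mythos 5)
The paper itself gives no proof of this proposition --- it is recalled as known, with the citation to Johnstone's adjoint lifting theorem \cite{johnstone1975adjoint} --- so there is no in-paper argument to compare against; what you have written is precisely the standard (dual of the monadic) lifting argument, and it is correct in outline. Your two structural inputs are right: \(U_{H}T = SU_{G}\) is left adjoint to \(F_{G}R\), giving \(\cat{F}_{H}(TX, F_{H}D) \cong \cat{E}_{G}(X, F_{G}RD)\) naturally in \(D\), and every \(H\)-coalgebra \(Y\) is the equalizer of the \(U_{H}\)-split coreflexive pair \(\eta_{H}F_{H}U_{H}Y,\ F_{H}U_{H}\eta_{H}Y\), so \(\cat{F}_{H}(TX,Y)\) is computed as the equalizer of the transported pair. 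The identification of \(\psi_{2}\) with \(\varphi_{2}\) by naturality in \(D\) is clean, and your characterization of the transported maps as \(f \mapsto Hf \circ \gamma_{TX}\) and \(f \mapsto \gamma_{Y} \circ f\) on \(\cat{F}(U_{H}TX, U_{H}Y)\) (whose equalizer is exactly the set of coalgebra morphisms) is the right way to pin down \(\psi_{1}\). The one step you only gesture at --- that this map corresponds to postcomposition with \(\varphi_{1}(Y)\) under \(\cat{F}(U_{H}TX,D) \cong \cat{E}_{G}(X,F_{G}RD)\) --- does go through, and more easily than you suggest: writing \(\bar{h} = \varepsilon_{G}RU_{H}Y \circ U_{G}h\) for the adjunct of \(h : X \to F_{G}RU_{H}Y\) and using \(\gamma_{TX} = \sigma(U_{G}X) \circ S\gamma_{X}\), naturality of \(i\) and of \(\sigma\) together with the coalgebra-morphism identity \(U_{G}h = G\bar{h} \circ \gamma_{X}\) yield \(R(Hf \circ \gamma_{TX}) \circ i(U_{G}X) = \tilde{\sigma}(U_{H}Y) \circ U_{G}h\), which is exactly \(\varphi_{1}(Y) \circ h\) after applying \(F_{G}(-) \circ \eta_{G}X\); the oplax counit law \(\varepsilon_{H}S \circ \sigma = S\varepsilon_{G}\) and the triangle identities are what give \(F_{G}R\varepsilon_{H}U_{H} \circ \varphi_{1} = 1\), i.e.\ coreflexivity, as you say. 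So there is no genuine gap, only a routine mate computation left implicit; your argument is essentially the proof the cited reference supplies.
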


A slice of a category of coalgebras is again a category of coalgebras
in the following sense. Let \(G\) be a cartesian comonad on a
cartesian category \(\cat{E}\). For a \(G\)-coalgebra
\((A, \alpha)\), we define a functor
\(G_{\alpha} : \cat{E}/A \to \cat{E}/A\) to be the composite
\[
  \begin{tikzcd}
    \cat{E}/A \arrow[r,"G/A"] &
    \cat{E}/GA \arrow[r,"\alpha^{*}"] &
    \cat{E}/A.
  \end{tikzcd}
\]
One can show that \(G_{\alpha}\) is equipped with a comonad
structure. By definition \(G_{\alpha}\) is cartesian. For a morphism
of \(G\)-coalgebras \(f : (A, \alpha) \to (B, \beta)\), the canonical
natural transformation \(G_{\alpha}f^{*} \To f^{*}G_{\beta}\) is an
isomorphism. The category \((\cat{E}/A)_{G_{\alpha}}\) of
\(G_{\alpha}\)-coalgebras is naturally isomorphic to the slice
category \(\cat{E}_{G}/(A, \alpha)\).

Now we can describe pushforwards in a category of coalgebras. Let
\(f : (A, \alpha) \to (B, \beta)\) be a morphism of
\(G\)-coalgebras. We identify slice categories
\(\cat{E}_{G}/(A, \alpha)\) with \((\cat{E}/A)_{G_{\alpha}}\). By
Proposition \ref{prop:adjoint-lifting} the pushforward
\(\Pi^{\cat{E}_{G}}_{f}\) is the equalizer of the following
coreflexive pair:
\[
  \begin{tikzcd}
    F_{G_{\beta}} \Pi^{\cat{E}}_{f} U_{G_{\alpha}}
    \arrow[r,shift left,"\varphi_{1}"]
    \arrow[r,shift right,"\varphi_{2}"'] &
    F_{G_{\beta}} \Pi^{\cat{E}}_{f} G_{\alpha} U_{G_{\alpha}}
    \arrow[r,"F_{G_{\beta}} \Pi^{\cat{E}}_{f} \varepsilon_{G_{\alpha}}
    U_{G_{\alpha}}"] &
    [6ex]
    F_{G_{\beta}} \Pi^{\cat{E}}_{f} G_{\alpha}
  \end{tikzcd}
\]
\(\varphi_{1}\) is the composite
\[
  \begin{tikzcd}
    F_{G_{\beta}}\Pi^{\cat{E}}_{f}U_{G_{\alpha}}
    \arrow[r,"\eta_{G_{\beta}}
    F_{G_{\beta}}\Pi^{\cat{E}}_{f}U_{G_{\alpha}}"] &
    [6ex]
    F_{G_{\beta}}U_{G_{\beta}}F_{G_{\beta}}\Pi^{\cat{E}}_{f}U_{G_{\alpha}}
    = F_{G_{\beta}}G_{\beta}\Pi^{\cat{E}}_{f}U_{G_{\alpha}}
    \arrow[r,"F_{G_{\beta}}\sigma U_{G_{\alpha}}"] &
    F_{G_{\beta}}\Pi^{\cat{E}}_{f}G_{\alpha}U_{G_{\alpha}}
  \end{tikzcd}
\]
where \(\sigma : G_{\beta} \Pi^{\cat{E}}_{f} \to \Pi^{\cat{E}}_{f}
G_{\alpha}\) is the mate of the natural isomorphism \(f^{*} G_{\beta}
\cong G_{\alpha} f^{*}\); \(\varphi_{2}\) is \(F_{G_{\beta}}
\Pi^{\cat{E}}_{f} U_{G_{\alpha}} \eta_{G_{\alpha}} : F_{G_{\beta}}
\Pi^{\cat{E}}_{f} U_{G_{\alpha}} \to F_{G_{\beta}} \Pi^{\cat{E}}_{f}
U_{G_{\alpha}} F_{G_{\alpha}} U_{G_{\alpha}} = F_{G_{\beta}}
\Pi^{\cat{E}}_{f} G_{\alpha} U_{G_{\alpha}}\).

The following Proposition \ref{prop:algebra-of-coalgebra} describes
algebras in the category of coalgebras for a comonad.

\begin{proposition}
  \label{prop:algebra-of-coalgebra}
  Let \(G\) be a comonad on a category \(\cat{E}\) and \((P, \sigma)\)
  an oplax morphism \(G \to G\). Let
  \(Q : \cat{E}_{G} \to \cat{E}_{G}\) denote the corresponding functor
  such that \(U_{G}Q = PU_{G}\). The forgetful functor \(U_{G}\)
  induces a functor \(\Alg(Q) \to \Alg(P)\). Since \((G, \sigma)\) is
  a lax morphism \(P \to P\), we have a functor \(H : \Alg(P) \to
  \Alg(P)\) defined by \(H(X, s) = (GX, Gs \circ \sigma(X))\).
  \begin{enumerate}
  \item \label{item:12}
    \(H\) is part of a comonad on \(\Alg(P)\).
  \item \label{item:13}
    \(U_{P}H = GU_{P}\) and \((U_{P}, 1)\) is an oplax morphism \(H
    \to G\). Now we have a commutative diagram
    \[
      \begin{tikzcd}
        & \cat{E}_{G}
        \arrow[d,"U_{G}"] & \\
        \Alg(P)_{H}
        \arrow[ur]
        \arrow[dr,"U_{H}"'] &
        \cat{E} &
        \Alg(Q)
        \arrow[ul,"U_{Q}"']
        \arrow[dl] \\
        & \Alg(P)
        \arrow[u,"U_{P}"] &
      \end{tikzcd}
    \]
  \item \label{item:14}
    The category \(\Alg(Q)\) of \(Q\)-algebras is
    isomorphic over \(\cat{E}_{G} \times_{\cat{E}} \Alg(P)\) to the
    category \(\Alg(P)_{H}\) of \(H\)-coalgebras.
  \item \label{item:15}
    For any initial \(P\)-algebra \((W, s)\), there exists a unique
    initial \(Q\)-algebra \((W', s')\) such that \(U_{G}W' = W\) and
    \(U_{G}s' = s\).
  \end{enumerate}
\end{proposition}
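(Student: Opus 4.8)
The plan is to establish the four items in the order listed, since each one feeds the next, and to exploit the adjoint lifting theorem (Proposition~\ref{prop:comonad-oplax-morphism-lift} and~\ref{prop:adjoint-lifting}) together with the general proposition on lax morphisms inducing functors on algebras. For item~\ref{item:12}, the functor $H : \Alg(P) \to \Alg(P)$ is obtained by applying the ``$\tilde{(-)}$'' construction to the lax morphism $(G,\sigma) : P \to P$; I would equip $H$ with a comonad structure by checking that $\varepsilon_G$ and $\delta_G$ lift to morphisms of $P$-algebras $H \To 1$ and $H \To HH$. Concretely, for a $P$-algebra $(X,s)$, the component $\varepsilon_G(X) : GX \to X$ is a $P$-algebra morphism $H(X,s) \to (X,s)$ precisely because $(P,\sigma)$ being an oplax morphism $G \to G$ gives the identity $\varepsilon_G P \circ \sigma = P\varepsilon_G$; similarly the comultiplication law for $(P,\sigma)$ yields that $\delta_G(X)$ is a $P$-algebra morphism $H(X,s) \to HH(X,s)$. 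The comonad axioms for $H$ then follow componentwise from those for $G$ since $U_P$ is faithful and reflects the relevant equalities. This step is essentially bookkeeping.

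For item~\ref{item:13}, the equation $U_P H = G U_P$ holds by construction of $H$, and the natural transformation witnessing $(U_P, 1)$ as an oplax morphism $H \to G$ is the identity $U_P H = G U_P$; the oplax compatibility conditions reduce to the already-verified fact that $\varepsilon_G$ and $\delta_G$ are $P$-algebra morphisms, read off after applying $U_P$. The commutative diagram is then assembled from: the functor $\Alg(Q) \to \Alg(P)$ induced by $U_G$ (noting $U_G$ underlies a strong morphism $Q \to P$ since $U_G Q = P U_G$), the functor $\Alg(Q) \to \cat{E}_G$ forgetting the algebra structure, and the evident forgetful functors $U_G$, $U_P$, $U_H$; commutativity of each triangle is a triviality about forgetful functors.

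Item~\ref{item:14} is the crux, and I expect it to be the main obstacle. The idea is that a $Q$-algebra is an object $(A,\alpha) \in \cat{E}_G$ together with a $G$-coalgebra morphism $Q(A,\alpha) \to (A,\alpha)$, i.e.\ a map $s : PA \to A$ in $\cat{E}$ that is simultaneously a $P$-algebra structure on $A$ \emph{and} compatible with the coalgebra structures in the sense that $\alpha$ is a $P$-algebra morphism $(A,s) \to H(A,s) = (GA, Gs \circ \sigma(A))$. Unwinding the definition of an $H$-coalgebra, this is exactly the data of a $P$-algebra $(A,s)$ equipped with a map $\alpha : (A,s) \to H(A,s)$ in $\Alg(P)$ satisfying the coalgebra counit and comultiplication laws --- and those laws, after applying the faithful $U_P$, are precisely the $G$-coalgebra laws for $(A,\alpha)$. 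Thus the two sets of data coincide on the nose, and the correspondence is visibly functorial and compatible with the projections to $\cat{E}_G$ and to $\Alg(P)$, giving an isomorphism over $\cat{E}_G \times_{\cat{E}} \Alg(P)$. The delicate point is matching up the $H$-coalgebra comultiplication law $\delta_H(A,s) \circ \alpha = H\alpha \circ \alpha$ with the $G$-coalgebra law $\delta_G(A) \circ \alpha = G\alpha \circ \alpha$, which requires knowing that $U_P$ sends $\delta_H$ to $\delta_G$ and $H\alpha$ to $G\alpha$ --- both of which are immediate from item~\ref{item:12}'s construction.

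Finally, item~\ref{item:15} follows formally: by item~\ref{item:14}, $\Alg(Q) \cong \Alg(P)_H$, and the forgetful functor $\Alg(P)_H \to \Alg(P)$ creates colimits (Lemma~\ref{lem:coalgebra-create-colimits}\ref{item:6}), hence in particular creates initial objects. So an initial $P$-algebra $(W,s)$ lifts uniquely to an initial object $(W',s')$ of $\Alg(P)_H \cong \Alg(Q)$, and tracing through the isomorphism shows $U_G W' = W$ and $U_G s' = s$ as required; uniqueness is the uniqueness part of ``creates''.
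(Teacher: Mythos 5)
Your proposal is correct and follows essentially the same route as the paper: items \ref{item:12} and \ref{item:13} by direct verification using the oplax-morphism identities, item \ref{item:14} by unwinding both a \(Q\)-algebra and an \(H\)-coalgebra to the same data (an object \(A\), a \(G\)-coalgebra structure \(\alpha\), a map \(s : PA \to A\) satisfying the compatibility square \(\alpha \circ s = Gs \circ \sigma(A) \circ P\alpha\)), and item \ref{item:15} from item \ref{item:14} together with Lemma \ref{lem:coalgebra-create-colimits}. The paper merely states the first two items as straightforward and records the unwinding for item \ref{item:14}; your write-up supplies the same argument in more detail.
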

\begin{proof}
  \ref{item:12} and \ref{item:13} are straightforward. To see
  \ref{item:14}, observe that both a \(Q\)-algebra and a
  \(H\)-coalgebra consist of the following data:
  \begin{itemize}
  \item an object \(A \in \cat{E}\);
  \item a morphism \(\alpha : A \to GA\);
  \item a morphism \(s : PA \to A\)
  \end{itemize}
  such that \(\alpha\) is a \(G\)-coalgebra and the following diagram
  commutes.
  \[
    \begin{tikzcd}
      PA \arrow[r,"P\alpha"]
      \arrow[d,"s"'] &
      PGA \arrow[r,"\sigma(A)"] &
      GPA \arrow[d,"Gs"] \\
      A \arrow[rr,"\alpha"'] & &
      GA
    \end{tikzcd}
  \]
  \ref{item:15} follows from \ref{item:14} and Lemma \ref{lem:coalgebra-create-colimits}.
\end{proof}

\begin{corollary}
  \label{cor:coalgebra-create-nno}
  Let \(G\) be a cartesian comonad on a cartesian category \(\cat{E}\)
  with finite coproducts. Then the forgetful functor
  \(U_{G} : \cat{E}_{G} \to \cat{E}\) creates natural number objects.
\end{corollary}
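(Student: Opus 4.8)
The plan is to recognize a natural number object as the initial algebra for a particular polynomial endofunctor and then invoke Proposition~\ref{prop:algebra-of-coalgebra}\ref{item:15}. Recall that in a cartesian category $\cat{E}$ with finite coproducts, a natural number object is precisely an initial algebra for the endofunctor $1 + (-) : \cat{E} \to \cat{E}$, which is a polynomial functor in the (non-dependent) sense. So the first step is to set $P = 1 + (-) : \cat{E} \to \cat{E}$ and exhibit a canonical oplax morphism structure $\sigma : PG \To GP$ making $(P, \sigma)$ an oplax endomorphism of the comonad $G$. Since $G$ is cartesian, it preserves the terminal object $1$ and (assuming coproducts are preserved by $G$, or more precisely using the canonical comparison) we get a natural map $\sigma(X) : 1 + GX \to G(1 + GX)$... no, rather we want $\sigma : 1 + G(-) \To G(1 + (-))$, whose first component $1 \to G1$ is the inverse of $G$'s terminal-preservation iso (followed by nothing, since $G1 \cong 1$), and whose second component $G(-) \to G(1 + (-))$ is $G$ applied to the coproduct inclusion. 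Checking that this $\sigma$ satisfies the oplax morphism axioms relative to $\varepsilon_G$ and $\delta_G$ is a routine diagram chase using naturality and the comonad laws.

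Next, let $Q : \cat{E}_G \to \cat{E}_G$ be the functor corresponding to $(P, \sigma)$ under Proposition~\ref{prop:comonad-oplax-morphism-lift}, so $U_G Q = P U_G = 1 + U_G(-)$. I would then observe that $Q$ is naturally isomorphic to $1 + (-)$ on $\cat{E}_G$: the terminal object of $\cat{E}_G$ is $1$ with its unique coalgebra structure (by Lemma~\ref{lem:coalgebra-create-colimits}\ref{item:7}, since $1$ is a finite limit), finite coproducts in $\cat{E}_G$ are created by $U_G$ (Lemma~\ref{lem:coalgebra-create-colimits}\ref{item:6}), and the coalgebra structure that $Q$ puts on $1 + U_G(A,\alpha)$ is exactly the one induced on the coproduct $1 + (A,\alpha)$ in $\cat{E}_G$ — this follows because both are determined by the universal property of the coproduct together with the chosen structure maps on the summands. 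Hence an initial $Q$-algebra is the same thing as a natural number object in $\cat{E}_G$.

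Finally, suppose $\cat{E}$ has a natural number object $N$; this is an initial $P$-algebra $(N, s)$ where $s : 1 + N \to N$. By Proposition~\ref{prop:algebra-of-coalgebra}\ref{item:15} there is a unique initial $Q$-algebra $(N', s')$ with $U_G N' = N$ and $U_G s' = s$. Under the identification of $Q$ with $1 + (-)$, $(N', s')$ is a natural number object in $\cat{E}_G$, and $U_G N' = N$, $U_G s' = s$ says exactly that $U_G$ sends it to the given natural number object in $\cat{E}$. Uniqueness of $(N', s')$ among $Q$-algebras lying over $(N,s)$ gives the ``creates'' part: any natural number object of $\cat{E}_G$ whose image under $U_G$ is $(N,s)$ must coincide with $(N', s')$. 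I expect the main obstacle to be the bookkeeping in the second paragraph — verifying carefully that the coalgebra structure $Q$ assigns to $1 + U_G(A,\alpha)$ agrees with the coproduct coalgebra structure in $\cat{E}_G$, i.e. that the $\sigma$ defined via $G$ of the coproduct inclusions is exactly the comparison needed for $U_G$ to create the coproduct $1 + (A,\alpha)$. This is conceptually clear but requires spelling out the interaction between the oplax structure $\sigma$ and the coproduct injections, together with the fact that $U_G$ creates (not merely preserves) these colimits.
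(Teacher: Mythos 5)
Your proposal is correct and is essentially the paper's own argument: identify a natural number object with an initial algebra for \(1 + (-)\) and invoke Proposition~\ref{prop:algebra-of-coalgebra}\ref{item:15}. The paper simply runs your second paragraph in the opposite direction: since \(U_{G}\) creates coproducts and (because \(G\) is cartesian) the terminal object (Lemma~\ref{lem:coalgebra-create-colimits}), the functor \(1_{\cat{E}_{G}} + (-)\) already lies over \(1_{\cat{E}} + (-)\), so the oplax structure \(\sigma\) you build by hand comes for free from Proposition~\ref{prop:comonad-oplax-morphism-lift}, and the identification of the lifted functor \(Q\) with the coproduct functor on \(\cat{E}_{G}\) needs no separate verification.
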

\begin{proof}
  By Lemma \ref{lem:coalgebra-create-colimits}, the diagram
  \[
    \begin{tikzcd}
      \cat{E}_{G} \arrow[r,"1_{\cat{E}_{G}} + (-)"]
      \arrow[d,"U_{G}"'] &
      \cat{E}_{G} \arrow[d,"U_{G}"] \\
      \cat{E} \arrow[r,"1_{\cat{E}} + (-)"'] &
      \cat{E}
    \end{tikzcd}
  \]
  commutes. Then use Proposition \ref{prop:algebra-of-coalgebra}.
\end{proof}
\endgroup

\section{\(W\)-Types in Categories of Coalgebras}
\label{sec:w-types-coalgebras}
\begingroup
\newcommand{\E}{\cat{E}}
\newcommand{\PP}{\class{P}}

We show the main theorem: the existence of \(W\)-types in the category
of coalgebras for a cartesian comonad. We also give some applications
of this theorem.

\begin{lemma}
  \label{lem:nno-as-w-type}
  Let \(\E\) be an extensive locally cartesian closed category. If
  \(\E\) has \(W\)-types, then it has a natural number object.
\end{lemma}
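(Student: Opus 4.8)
The plan is to realize the natural number object as a $W$-type for a suitably chosen endopolynomial on the terminal object $1$. The obvious candidate is the endopolynomial that represents the functor $X \mapsto 1 + X$ on $\E/1 \cong \E$: take the polynomial
\[
  \begin{tikzcd}
    1 & 1 \arrow[l,"1_1"'] \arrow[r,"1_1"] & 1 + 1 \arrow[r] & 1
  \end{tikzcd}
\]
(with $A = 1$, $B = 1+1$, the right leg being the unique map to $1$, and the two legs out of $A$ both identities after composing with one coproduct inclusion)—more precisely, I would take $A = 1$, $g : 1 \to 1+1$ the first coproduct inclusion is \emph{not} quite right; rather one wants $P_{f,g,h}(X) \cong 1 + X$, which is achieved by the polynomial with $B = 1 + 1$, $f : B \to 1$ the unique map, $A$ the pullback of $g$ along the second inclusion so that $g : A \to B$ restricts to an iso over one summand and to $0 \to 1$ over the other, and $h : A \to 1$ unique. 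Then $P_{f,g,h}(X) = f_! g_* h^* X$, and unwinding the coproduct decomposition of $B$ gives $P_{f,g,h}(X) \cong 1 + X$ as an object of $\E/1$. Here I am using extensivity to decompose $\E/B \simeq \E/1 \times \E/1$ and to compute $g_*$ summandwise. This is the only place extensivity enters.

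Next I would invoke the hypothesis that $\E$ has $W$-types to obtain an initial $P_{f,g,h}$-algebra $(N, s)$ with $s : 1 + N \to N$. Writing $s$ as a pair $(z : 1 \to N,\ \mathrm{succ} : N \to N)$, the claim is that $(N, z, \mathrm{succ})$ is a natural number object. Initiality of $(N,s)$ says exactly: for every object $X$ equipped with $x : 1 \to X$ and $t : X \to X$ there is a unique algebra morphism $(N,s) \to (X, [x,t])$, i.e.\ a unique $u : N \to X$ with $u z = x$ and $u\,\mathrm{succ} = t\,u$. That is precisely the universal property of the (parametrized-free, or at least the simple) natural number object. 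If one wants the full Lawvere natural number object in the non-cartesian-closed setting the universal property I just described is the standard definition; in a locally cartesian closed category this suffices, and one can upgrade to the parametrized version using the closed structure if needed, but I would phrase the statement and proof in terms of the simple universal property to keep things clean.

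The main obstacle is the first step: correctly exhibiting $X \mapsto 1 + X$ as a polynomial functor $\E/1 \to \E/1$ and verifying the isomorphism, which requires a careful use of extensivity to split $\E/(1+1)$ and to see that the pushforward $g_*$ and the dependent sum $f_!$ compute the coproduct as expected. Once that identification is in hand, everything else is a direct translation of "initial algebra for $1 + (-)$" into "natural number object," which is routine. I should also note that local cartesian closure is needed merely to talk about $W$-types (the polynomial functors involve $g_*$), and that no additional hypotheses beyond those stated are used.
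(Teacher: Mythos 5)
Your proposal is correct and takes essentially the same route as the paper: the polynomial you settle on (fiber \(1\) over one summand of \(B = 1 + 1\), fiber \(0\) over the other) is, up to isomorphism, exactly the paper's polynomial \(1 \leftarrow 0 + 1 \rightarrow 1 + 1 \rightarrow 1\), whose associated functor is \(X \mapsto 1 + X\) and whose initial algebra is the natural number object. (Incidentally, your first attempt with \(g : 1 \to 1+1\) a coproduct inclusion was already correct---it computes to \(X + 1 \cong 1 + X\)---so the ``correction'' changes nothing up to isomorphism.)
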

\begin{proof}
  Because the natural number object is the initial algebra for the
  associated functor of the polynomial
  \[
    \begin{tikzcd}
      1 &
      0 + 1 \arrow[l] \arrow[r] &
      1 + 1 \arrow[r] &
      1.
    \end{tikzcd}
  \]
\end{proof}

\begin{lemma}
  \label{lem:coalgebra-extensive}
  Let \(G\) be a cartesian comonad on a locally cartesian closed
  category \(\E\). If \(\E\) is an extensive category, then so is
  \(\E_{G}\).
\end{lemma}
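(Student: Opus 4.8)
The plan is to use the characterization of extensivity for locally cartesian closed categories (Proposition \ref{prop:extensive-lccc}). Since \(\E_{G}\) is locally cartesian closed by Proposition \ref{prop:coalgebra-lccc}, and has finite coproducts because \(U_{G}\) creates colimits (Lemma \ref{lem:coalgebra-create-colimits}(\ref{item:6})), it will be enough to show that the coproduct \(1_{\E_{G}} + 1_{\E_{G}}\) is disjoint, that is, that its two inclusions are monic and their pullback is the initial object of \(\E_{G}\).

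The strategy for this is to transport disjointness of \(1 + 1\) in \(\E\) (which holds by Proposition \ref{prop:extensive-lccc}, as \(\E\) is extensive) along \(U_{G}\). The facts I would use are that \(U_{G}\) creates finite colimits and, because \(G\) is cartesian, also creates finite limits (Lemma \ref{lem:coalgebra-create-colimits}), and that \(U_{G}\) is conservative (a coalgebra morphism with invertible underlying morphism is automatically an isomorphism of coalgebras). Consequently \(U_{G} 1_{\E_{G}} = 1\), \(U_{G} 0_{\E_{G}} = 0\), and \(U_{G}(1_{\E_{G}} + 1_{\E_{G}}) = 1 + 1\) with the inclusions lying over the inclusions of \(1 + 1\) in \(\E\). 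Forming the pullback \(P\) of the two inclusions \(1_{\E_{G}} \to 1_{\E_{G}} + 1_{\E_{G}}\) inside \(\E_{G}\), the functor \(U_{G}\) sends it to \(1 \times_{1+1} 1 \cong 0\); since \(U_{G}\) is conservative and \(U_{G} 0_{\E_{G}} = 0\), the canonical morphism \(0_{\E_{G}} \to P\) is an isomorphism, so \(P\) is the initial object. To see that each inclusion \(\iota : 1_{\E_{G}} \to 1_{\E_{G}} + 1_{\E_{G}}\) is monic, I would form the pullback of \(\iota\) against itself in \(\E_{G}\); \(U_{G}\) sends it to the pullback of a monomorphism against itself, so the underlying morphism of the diagonal becomes invertible, and conservativity of \(U_{G}\) upgrades this to an isomorphism in \(\E_{G}\). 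Hence \(1_{\E_{G}} + 1_{\E_{G}}\) is disjoint and \(\E_{G}\) is extensive.

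None of the steps requires a genuine computation; the point to be careful about is invoking both halves of Lemma \ref{lem:coalgebra-create-colimits} simultaneously — we need \(U_{G}\) to create the colimits that identify \(1_{\E_{G}} + 1_{\E_{G}}\) and \(0_{\E_{G}}\) and the finite limits that identify \(1_{\E_{G}}\) and the relevant pullbacks — together with the conservativity of \(U_{G}\), which is what lets the property of being disjoint, a property detected entirely by these (co)limits, descend from \(\E\) to \(\E_{G}\).
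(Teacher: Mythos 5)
Your proposal is correct and follows essentially the same route as the paper: the paper's proof likewise observes that, by Lemma \ref{lem:coalgebra-create-colimits}, the coproduct \(1 + 1\) in \(\cat{E}_{G}\) is created from the disjoint coproduct \(1+1\) in \(\cat{E}\) (creation of colimits and, for the cartesian comonad, of finite limits giving the needed pullbacks and reflection), and then concludes by Proposition \ref{prop:extensive-lccc}. Your spelling out of conservativity of \(U_{G}\) is just the explicit form of the reflection implicit in ``creates (co)limits,'' so nothing is genuinely different.
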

\begin{proof}
  By Lemma \ref{lem:coalgebra-create-colimits}, the binary coproduct
  \(1 + 1\) in \(\cat{E}_{G}\) is disjoint. Then use Proposition
  \ref{prop:extensive-lccc}.
\end{proof}

\begin{lemma}
  \label{lem:algebra-for-equalizer}
  Let \(\E\) be a locally cartesian closed category,
  \(P_{0}, P_{1} : \E \to \E\) endofunctors and
  \(\varphi, \psi : P_{0} \To P_{1}\) and
  \(\varepsilon : P_{1} \To P_{0}\) natural transformations such that
  \(\varepsilon\varphi = \varepsilon\psi = 1\). Let
  \(\iota : P \To P_{0}\) denote the equalizer of \(\varphi\) and
  \(\psi\). Suppose that \(P_{0}\) preserves pullbacks and \(P_{1}\)
  preserves monomorphisms. If \(P_{0}\) and \(P_{1}\) have initial
  algebras, then \(P\) has a well-founded fixed point.
\end{lemma}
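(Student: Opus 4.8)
\section*{Proof proposal}

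The plan is to realise the well-founded fixed point as a subobject of the initial \(P_{0}\)-algebra, cut out by an equalizer coming from the initial \(P_{1}\)-algebra.

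First I would set up the comparison maps. Write \((W_{0}, s_{0})\) and \((W_{1}, s_{1})\) for the initial \(P_{0}\)- and \(P_{1}\)-algebras; the structure maps \(s_{0}\) and \(s_{1}\) are invertible. The morphisms \(s_{1} \circ \varphi_{W_{1}}\) and \(s_{1} \circ \psi_{W_{1}}\) equip \(W_{1}\) with two \(P_{0}\)-algebra structures, so initiality of \(W_{0}\) yields unique \(P_{0}\)-algebra maps \(a, b : W_{0} \to W_{1}\); and \(s_{0} \circ \varepsilon_{W_{0}}\) equips \(W_{0}\) with a \(P_{1}\)-algebra structure, so initiality of \(W_{1}\) yields a unique \(P_{1}\)-algebra map \(u : W_{1} \to W_{0}\). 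A diagram chase using \(\varepsilon\varphi = \varepsilon\psi = 1\) and the naturality of \(\varphi\), \(\psi\), \(\varepsilon\) shows that \(u\) is also a \(P_{0}\)-algebra map from each of the two \(P_{0}\)-algebra structures on \(W_{1}\) into \((W_{0}, s_{0})\); hence \(u \circ a\) and \(u \circ b\) are \(P_{0}\)-algebra endomorphisms of the initial \(P_{0}\)-algebra and therefore both equal \(1_{W_{0}}\), so \((a, b)\) is a coreflexive pair with common retraction \(u\).

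Next I would build the \(P\)-algebra. Let \(j : W \rightarrowtail W_{0}\) be the equalizer of \(a\) and \(b\). Combining \(a \circ j = b \circ j\), the identities \(a \circ s_{0} = s_{1} \circ \varphi_{W_{1}} \circ P_{0} a\) and \(b \circ s_{0} = s_{1} \circ \psi_{W_{1}} \circ P_{0} b\), the naturality of \(\varphi\) and \(\psi\) at \(a \circ j\), and the equation \(\varphi_{W} \circ \iota_{W} = \psi_{W} \circ \iota_{W}\) defining \(\iota_{W}\), one sees that \(s_{0} \circ P_{0} j \circ \iota_{W} : PW \to W_{0}\) equalizes \(a\) and \(b\), hence factors uniquely as \(j \circ \theta\) for a map \(\theta : PW \to W\); by naturality of \(\iota\) this makes \((W, \theta)\) a \(P\)-algebra and \(j\) a morphism of \(P\)-algebras into \((W_{0}, s_{0} \circ \iota_{W_{0}})\). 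The crux is that \(\theta\) is an isomorphism. It is monic because \(j \circ \theta = s_{0} \circ P_{0} j \circ \iota_{W}\), where \(s_{0}\) is invertible and \(P_{0} j\) and \(\iota_{W}\) are monic (\(P_{0}\) preserves monomorphisms since it preserves pullbacks). For the converse I would chase a generalised element \(x : T \to W\): writing \(j \circ x = s_{0} \circ y\) and applying \(a\) and \(b\) as above gives \(\varphi_{W_{1}} \circ P_{0} a \circ y = \psi_{W_{1}} \circ P_{0} b \circ y\), and postcomposing with \(\varepsilon_{W_{1}}\) gives \(P_{0} a \circ y = P_{0} b \circ y\); since \((a, b)\) is coreflexive its equalizer \(W\) is a pullback, which \(P_{0}\) preserves, so \(P_{0} W\) is the equalizer of \(P_{0} a\) and \(P_{0} b\) and \(y\) factors uniquely as \(P_{0} j \circ y'\); feeding this back, the naturality of \(\varphi\) and \(\psi\) at \(a \circ j\) together with the fact that \(P_{1}\) preserves the monomorphism \(a \circ j\) yields \(\varphi_{W} \circ y' = \psi_{W} \circ y'\), so \(y' = \iota_{W} \circ z\) for a unique \(z\), and then \(\theta \circ z = x\). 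Taking \(x = 1_{W}\) exhibits a section of \(\theta\), and a monic split epimorphism is invertible; thus \((W, \theta)\) is a fixed point of \(P\).

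It remains to prove that \((W, \theta)\) is well-founded in \(\Alg(P)\), and I expect this to be the main obstacle. Since \(\iota\) is a natural monomorphism into the mono-preserving functor \(P_{0}\), the functor \(P\) preserves monomorphisms; and since \(U_{P} : \Alg(P) \to \E\) creates limits, any monomorphism \(i : (Y, \theta_{Y}) \rightarrowtail (W, \theta)\) in \(\Alg(P)\) is monic in \(\E\), and composing with \(j\) — which is a \(P\)-subalgebra inclusion into \((W_{0}, s_{0} \circ \iota_{W_{0}})\) — exhibits the image of \(Y\) as a subobject of \(W_{0}\) that lies inside \(W\) and is closed under the operation \(V \mapsto s_{0} \circ \iota_{W_{0}} \circ PV\) on subobjects of \(W_{0}\). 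The plan is then to show that \(W\) is the least subobject of \(W_{0}\) closed under this operation, so that the image of \(Y\) must be all of \(W\) and \(i\) is an isomorphism. Establishing this minimality is the delicate step: a closed subobject need not be a \(P_{0}\)-subalgebra — indeed \(W_{0}\) itself is closed but in general properly contains \(W\) — so one cannot simply appeal to the well-foundedness of the initial \(P_{0}\)-algebra, and must instead use the explicit description of \(W\) as the equalizer of the two \(P_{0}\)-algebra maps \(a\), \(b\) into the initial \(P_{1}\)-algebra, exploiting the well-foundedness of both \(W_{0}\) and \(W_{1}\).
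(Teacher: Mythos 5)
Your construction of the fixed point is sound and matches the paper's: the same comparison maps (your $a,b,u$ are the paper's $u,v,e$), the same equalizer $W$, and a valid argument that the induced structure map is an isomorphism (the paper gets this slightly more slickly by exhibiting $PW$ as the equalizer of $\varphi(W_1)\circ P_0 a$ and $\psi(W_1)\circ P_0 b$ and comparing equalizer rows via $s_0$ and $s_1$, but your generalized-element chase through the coreflexive-equalizer-as-pullback is correct). The problem is that you stop exactly where the lemma's real content begins: the well-foundedness of $(W,\theta)$ is not proved, only reformulated as a minimality claim (``$W$ is the least subobject of $W_0$ closed under $V\mapsto s_0\circ\iota_{W_0}\circ PV$'') that you acknowledge you cannot establish. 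That claim is essentially equivalent to what is to be shown, and the route you gesture at (``exploiting the well-foundedness of both $W_0$ and $W_1$'') does not obviously go anywhere: a least closed subobject cannot be manufactured by intersection or iteration in a general locally cartesian closed category, and intersecting a closed $V$ with $W$ just reproduces the statement you are trying to prove.

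The missing idea is to go in the opposite direction: given a $P$-subalgebra $X\rightarrowtail W$, transport it \emph{up} to $W_0$ rather than trying to characterize $W$ from below. The paper sets $X_0 := i_*X$, the pushforward of $X$ along the monomorphism $i\colon W\rightarrowtail W_0$ --- this is the one place the local cartesian closure hypothesis is actually used, and it is conspicuously absent from your proposal. Since $i$ is monic, $i^*X_0\cong X$, so it suffices to show $X_0=W_0$; and this follows from the initiality of $(W_0,s_0)$ (an initial algebra has no proper subalgebras) once one checks that $s_0\circ P_0(X_0\rightarrowtail P_0W_0)$ factors through $X_0$. That factorization is where the adjunction $i^*\dashv i_*$ earns its keep: it reduces the problem to a factorization over $W$, which is then extracted from the $P$-algebra structure on $X$ together with the pullback squares $PX = P_0X\times_{P_0W}PW$ and $P_0X=P_0X_0\times_{P_0W_0}P_0W$ supplied by Lemma \ref{lem:equalizer-induces-mono} and the pullback-preservation of $P_0$. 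Without this pushforward step (or a substitute for it), your argument does not establish well-foundedness.
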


We will prove Lemma \ref{lem:algebra-for-equalizer} in Section
\ref{sec:algebr-equal-corefl}.

\begin{theorem}
  \label{thm:w-types-of-coalgebras}
  Let \(\E\) be an extensive locally cartesian closed category with
  \(W\)-types, \(\PP\) a class of polynomial functors on \(\E\) and
  \(G\) a cartesian comonad on \(\E\). Suppose that the underlying
  functor \(\E \to \E\) of \(G\) belongs to \(\PP(1, 1)\). Then the
  category \(\E_{G}\) of \(G\)-coalgebras has \(W\)-types.
\end{theorem}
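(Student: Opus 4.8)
The plan is to reduce the existence of $W$-types in $\E_{G}$ to the existence of a well-founded fixed point (which suffices by the Characterization Theorem~\ref{thm:w-types-characterization}, since $\E_{G}$ is extensive by Lemma~\ref{lem:coalgebra-extensive}, locally cartesian closed by Proposition~\ref{prop:coalgebra-lccc}, and has a natural number object by Corollary~\ref{cor:coalgebra-create-nno} and Lemma~\ref{lem:nno-as-w-type}, or directly from $W$-types via Lemma~\ref{lem:nno-as-w-type} transported along $U_G$). So fix an endopolynomial $(f,g,h)$ on an object $(I,\iota)\in\E_{G}$; identifying $\E_{G}/(I,\iota)$ with $(\E/I)_{G_{\iota}}$, the associated polynomial functor $P^{\E_G}_{f,g,h} = f_{!}\,\Pi^{\E_G}_{g}\,h^{*}$ on $\E_{G}/(I,\iota)$ is the composite of a pullback, a pushforward, and a dependent sum, all taken in $\E_{G}$. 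The pullback $h^{*}$ and the sum $f_{!}$ are created by $U_{G_\iota}$ (Lemma~\ref{lem:coalgebra-create-colimits}); the only nontrivial piece is the pushforward $\Pi^{\E_G}_{g}$, which by the discussion following Proposition~\ref{prop:adjoint-lifting} is the equalizer of an explicit coreflexive pair built from $\Pi^{\E}_{g}$ and the (cofree) coalgebra functors.

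First I would set up the endofunctor on $\E_G/(I,\iota)$ as $P := f_{!}\,E\,h^{*}$ where $E$ is the equalizer functor, and exhibit two auxiliary endofunctors $P_0, P_1$ together with natural transformations $\varphi,\psi\colon P_0\To P_1$ and $\varepsilon\colon P_1\To P_0$ with $\varepsilon\varphi=\varepsilon\psi=1$, so that $P$ is the equalizer of $\varphi,\psi$ in the sense of Lemma~\ref{lem:algebra-for-equalizer}. Concretely, $P_0 = f_{!}\,F_{G_{\iota}}\,\Pi^{\E}_{g}\,U_{G_{\iota}}\,h^{*}$ and $P_1 = f_{!}\,F_{G_{\iota}}\,\Pi^{\E}_{g}\,G_{\iota}\,U_{G_{\iota}}\,h^{*}$, with $\varphi,\psi$ induced by $\varphi_1,\varphi_2$ from Proposition~\ref{prop:adjoint-lifting} and $\varepsilon$ by $F_{G_\beta}\Pi^{\E}_g\varepsilon_{G_\alpha}U_{G_\alpha}$. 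The coreflexivity equation $\varepsilon_{G_\alpha}\circ\eta_{G_\alpha} = 1$ and the comonad/oplax identities give the required $\varepsilon\varphi=\varepsilon\psi=1$.

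Next I would verify the hypotheses of Lemma~\ref{lem:algebra-for-equalizer} for this $P_0,P_1$. That $P_0$ preserves pullbacks: $h^{*}$, $U_{G_\iota}$, $\Pi^{\E}_g$, $F_{G_\iota}$, and $f_{!}$ all preserve pullbacks — the first three are right adjoints or (for $U_{G_\iota}$) create limits, $F_{G_\iota}$ is a right adjoint hence preserves all limits, and $f_{!}$ preserves pullbacks since $\E$ is locally cartesian closed (pullback along the canonical map plus stability). That $P_1$ preserves monomorphisms: same functors, plus $G_{\iota}$, and $G_\iota = \iota^{*}\circ(G/I)$ is a composite of a pullback functor and a finite-limit-preserving functor, hence preserves monos; $F_{G_\iota}$ and $f_{!}$ preserve monos (a right adjoint preserves monos; $f_{!}$ preserves monos in an lccc). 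Finally, $P_0$ and $P_1$ have initial algebras: here is where the class-of-polynomial-functors hypothesis enters. Using that $G\in\PP(1,1)$, that $\PP$ is closed under composition, slicing, isomorphism, and the operations $f_!$, $g_*$, $h^*$ (these lie in $\Poly\subseteq\PP$, suitably sliced via Proposition~\ref{prop:polynomial-slice}), one shows $U_{G_\iota}\,P_k$ is, up to the identification of slices, a $\PP$-polynomial endofunctor on a slice of $\E$ — the functor $F_{G_\iota}$ contributes via $G_\iota = \iota^*(G/I)$ composed appropriately — so it has an initial algebra in $\E$; then Proposition~\ref{prop:algebra-of-coalgebra}\ref{item:15} lifts it to an initial $P_k$-algebra in $\E_G/(I,\iota)$. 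Lemma~\ref{lem:algebra-for-equalizer} then yields a well-founded fixed point of $P$, and hence, by the Characterization Theorem, an initial $P$-algebra, i.e. the desired $W$-type.

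The main obstacle I expect is the bookkeeping in the last step: showing that $P_0$ and $P_1$ — which involve the cofree-coalgebra functor $F_{G_\iota}$, not just pullbacks, sums, and pushforwards — genuinely fall within the scope of Proposition~\ref{prop:algebra-of-coalgebra} and reduce to $\PP$-polynomial functors over $\E$. The trick is that one should not think of $P_0,P_1$ as endofunctors of the coalgebra category directly but factor through the forgetful functor: $U_{G_\iota}P_k$ is an endofunctor of $\E/I$ built only from $\PP$-operations and the comonad $G$ (via $G_\iota$), and is itself an oplax endomorphism lifting to $P_k$; then $\PP$ being a class of polynomial functors guarantees an initial algebra downstairs, and Proposition~\ref{prop:algebra-of-coalgebra} transports it upstairs. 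Getting the oplax-morphism structure on $U_{G_\iota}P_k$ to match the coalgebra structure on $P_k$ correctly, and checking the slice identifications $(\E/I)_{G_\iota}\cong\E_G/(I,\iota)$ are compatible throughout, is the delicate part; everything else is a routine adjoint/mate computation.
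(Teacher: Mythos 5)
Your proposal follows essentially the same route as the paper: you express \(P_{f,g,h}\) on \(\cat{E}_{G}/(I,\iota)\) as the equalizer of the coreflexive pair coming from the equalizer description of pushforwards (Proposition \ref{prop:adjoint-lifting}), obtain initial algebras for the two outer functors by pushing them down along the forgetful functor to \(\class{P}\)-endofunctors on \(\cat{E}/I\) (using \(G \in \class{P}(1,1)\), composition and slicing) and lifting back via Proposition \ref{prop:algebra-of-coalgebra}, then apply Lemma \ref{lem:algebra-for-equalizer} and conclude with the Characterization Theorem, extensivity and the natural number object of \(\cat{E}_{G}\). Apart from harmless index slips (the cofree and forgetful functors in \(P_{0},P_{1}\) should be \(F_{G_{\beta}}\) and \(U_{G_{\alpha}}\) rather than \(F_{G_{\iota}}\), \(U_{G_{\iota}}\)), this is the paper's argument.
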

\begin{proof}
  Let
  \[
    \begin{tikzcd}
      (I, \iota) &
      (A, \alpha) \arrow[l,"h"']
      \arrow[r,"g"] &
      (B, \beta) \arrow[r,"f"] &
      (I, \iota)
    \end{tikzcd}
  \]
  be an endopolynomial in \(\E_{G}\). Since \(f_{!}\) preserves
  equalizers, the endofunctor
  \(P_{f, g, h} : \E_{G}/(I, \iota) \to \E_{G}/(I, \iota)\) is an
  equalizer of the form
  \[
    \begin{tikzcd}
      P_{f, g, h} \arrow[r,tail] &
      P_{0} \arrow[r,shift left,"\varphi"]
      \arrow[r,shift right,"\psi"'] &
      P_{1}
    \end{tikzcd}
  \]
  where \(P_{0} = f_{!}F_{G_{\beta}}\Pi^{\E}_{f}U_{G_{\alpha}}h^{*}\)
  and
  \(P_{1} =
  f_{!}F_{G_{\beta}}\Pi^{\E}_{f}G_{\alpha}U_{G_{\alpha}}h^{*}\), and
  both \(\varphi\) and \(\psi\) are sections of a natural
  transformation \(P_{1} \To P_{0}\) (see Section
  \ref{sec:comonads}). Since the forgetful functor
  \(U_{G} : \E_{G} \to \E\) creates finite limits, we have
  \(P_{0} = f_{!}F_{G_{\beta}}\Pi^{\E}_{f}h^{*}U_{G_{\iota}}\) and
  \(P_{1} =
  f_{!}F_{G_{\beta}}\Pi^{\E}_{f}G_{\alpha}h^{*}U_{G_{\iota}}\). We
  define \(Q_{0} : \E/I \to \E/I\) to be
  \(f_{!}G_{\beta}\Pi^{\E}_{f}h^{*}\) and \(Q_{1} : \E/I \to \E/I\) to
  be \(f_{!}G_{\beta}\Pi^{\E}_{f}G_{\alpha}h^{*}\). Then we have
  \(U_{G_{\iota}}P_{0} =
  U_{G_{\iota}}f_{!}F_{G_{\beta}}\Pi^{\E}_{f}h^{*}U_{G_{\iota}} =
  f_{!}U_{\beta}F_{G_{\beta}}\Pi^{\E}_{f}h^{*}U_{G_{\iota}} =
  f_{!}G_{\beta}\Pi^{\E}_{f}h^{*}U_{G_{\iota}} = Q_{0}U_{G_{\iota}}\)
  and similarly \(U_{G_{\iota}}P_{1} = Q_{1}U_{G_{\iota}}\). Since
  \(Q_{0}\) and \(Q_{1}\) belong to \(\PP(I, I)\) by construction,
  they have initial algebras, and so do \(P_{0}\) and \(P_{1}\) by
  Proposition \ref{prop:algebra-of-coalgebra}. By construction
  \(P_{0}\) and \(P_{1}\) preserve pullbacks, and thus there exists a
  well-founded fixed point \((W, s)\) of \(P_{f, g, h}\) by Lemma
  \ref{lem:algebra-for-equalizer}. By Theorem
  \ref{thm:w-types-characterization} \((W, s)\) is an initial
  \(P_{f, g, h}\)-algebra, because \(\E_{G}\) has a natural number
  object by Lemma \ref{lem:nno-as-w-type} and Corollary
  \ref{cor:coalgebra-create-nno} and is extensive by Lemma
  \ref{lem:coalgebra-extensive}.
\end{proof}

\subsection{Application: Internal Diagrams}
\label{sec:appl-intern-presh}
\begingroup

Let \(C\) be an internal category in a locally cartesian closed
category \(\E\). We denote by \(\partial_{0}\) and \(\partial_{1}\)
the domain and codomain morphisms
\begin{tikzcd}
  C_{1} \arrow[r,shift left]
  \arrow[r,shift right] &
  C_{0}
\end{tikzcd}
respectively. The category \(\E^{C}\) of internal diagrams on \(C\) is
isomorphic to the category of coalgebras for the cartesian comonad
\[
  \begin{tikzcd}
    \E/C_{0} \arrow[r,"\partial_{1}^{*}"] &
    \E/C_{1} \arrow[r,"(\partial_{0})_{*}"] &
    \E/C_{0}
  \end{tikzcd}
\]
on \(\E/C_{0}\). It is known that the diagonal functor \(\Delta : \E
\to \E^{C}\) is a locally cartesian closed functor.

\begin{corollary}
  Let \(\E\) be an extensive locally cartesian closed category and
  \(C\) an internal category in \(\E\). If \(\E\) has \(W\)-types,
  then so does the category \(\E^{C}\) of internal diagrams on
  \(C\). Moreover, the diagonal functor \(\Delta : \E \to \E^{C}\)
  preserves \(W\)-types.
\end{corollary}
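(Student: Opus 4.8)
The plan is to apply Theorem~\ref{thm:w-types-of-coalgebras} to the cartesian comonad \(G = (\partial_{0})_{*}\partial_{1}^{*}\) on \(\E/C_{0}\), exploiting the identification \(\E^{C} \cong (\E/C_{0})_{G}\) recalled above. First I would check that the base category \(\E/C_{0}\) meets the standing hypotheses of that theorem. It is locally cartesian closed because \(\E\) is. It is extensive: by Proposition~\ref{prop:extensive-lccc} it suffices that \(1 + 1\) be disjoint in \(\E/C_{0}\), and since the forgetful functor \(\E/C_{0} \to \E\) creates colimits and reflects monomorphisms while pullbacks in \(\E/C_{0}\) are computed in \(\E\), this reduces to disjointness of \(C_{0} + C_{0}\) in the extensive category \(\E\). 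It has \(W\)-types: under the identification \((\E/C_{0})/I \cong \E/I\) an endopolynomial in \(\E/C_{0}\) corresponds to an endopolynomial in \(\E\) with the same associated functor, hence an equivalent category of algebras, which has an initial object since \(\E\) has \(W\)-types.

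Next I would take \(\PP := \Poly_{\E}/C_{0}\). This is a class of polynomial functors on \(\E/C_{0}\): \(\Poly_{\E}\) is one on \(\E\) because \(\E\) is locally cartesian closed with \(W\)-types, and slicing a class of polynomial functors by an object again yields one. It then remains to verify that the underlying endofunctor of \(G\) belongs to \(\PP(1, 1)\), with \(1 = 1_{C_{0}}\) the terminal object of \(\E/C_{0}\). Under \((\E/C_{0})/1_{C_{0}} \cong \E/C_{0}\) that endofunctor is \((\partial_{0})_{*}\partial_{1}^{*} = (1_{C_{0}})_{!}(\partial_{0})_{*}(\partial_{1})^{*}\), i.e.\ the functor associated with the polynomial
\[
  \begin{tikzcd}
    C_{0} &
    C_{1} \arrow[l,"\partial_{1}"'] \arrow[r,"\partial_{0}"] &
    C_{0} \arrow[r,"1_{C_{0}}"] &
    C_{0}
  \end{tikzcd}
\]
from \(C_{0}\) to \(C_{0}\) in \(\E\); hence it lies in \(\Poly_{\E}(C_{0}, C_{0})\), which under the identification above is precisely \(\PP(1, 1)\). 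Theorem~\ref{thm:w-types-of-coalgebras} then yields that \((\E/C_{0})_{G} \cong \E^{C}\) has \(W\)-types. I expect this last verification to be the one needing genuine care: \((\partial_{0})_{*}\partial_{1}^{*}\) need not be represented by a polynomial internal to \(\E/C_{0}\), so it is essential that the theorem is phrased for a class of polynomial functors and that here we use \(\Poly_{\E}/C_{0}\) rather than \(\Poly_{\E/C_{0}}\) --- exactly the generalization the paper set up.

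For the preservation statement, the diagonal \(\Delta : \E \to \E^{C}\) is locally cartesian closed by hypothesis, so by Proposition~\ref{prop:left-adjoint-preserves-w-types} it is enough to exhibit a right adjoint to \(\Delta\). This is the limit functor \(\lim_{C} : \E^{C} \to \E\), which computes the limit of an internal diagram as an equalizer built from the pushforwards \((\partial_{0})_{*}\), \((\partial_{1})_{*}\) and the pushforward along \(C_{0} \to 1\); it exists because \(\E\) is locally cartesian closed, and a routine computation gives \(\Delta \adj \lim_{C}\). Then \(\Delta\) preserves \(W\)-types. Apart from the identification of \(G\) above and the (standard) existence of \(\lim_{C}\), every step is a routine transfer of structure along slicing.
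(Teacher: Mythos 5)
Your proposal is correct and follows essentially the same route as the paper: apply Theorem~\ref{thm:w-types-of-coalgebras} to the comonad \((\partial_{0})_{*}\partial_{1}^{*}\) on \(\E/C_{0}\) with \(\PP = \Poly_{\E}/C_{0}\), and deduce preservation from Proposition~\ref{prop:left-adjoint-preserves-w-types} via the right adjoint to \(\Delta\). The paper leaves implicit exactly the verifications you spell out (extensivity and \(W\)-types of the slice, membership of the comonad's underlying functor in \(\PP(1,1)\) via the polynomial \((1_{C_{0}}, \partial_{0}, \partial_{1})\), and the limit functor as right adjoint), and your treatment of these is accurate.
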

\begin{proof}
  Apply Theorem \ref{thm:w-types-of-coalgebras} with
  \(\PP = \Poly_{\E}/C_{0}\). The preservation of \(W\)-types follows
  from Proposition \ref{prop:left-adjoint-preserves-w-types} because
  \(\Delta\) has a right adjoint.
\end{proof}
\endgroup

\subsection{Application: Gluing}
\label{sec:application:-gluing}
\begingroup

Let \(H : \E_{1} \to \E_{2}\) be a cartesian functor between locally
cartesian closed categories. Since the comma category
\((\E_{2} \downarrow H)\) is isomorphic to the category of coalgebras
for the cartesian comonad
\((X_{1}, X_{2}) \mapsto (X_{1}, HX_{1} \times X_{2})\) on
\(\E_{1} \times \E_{2}\), it is locally cartesian closed. From the
construction of pushforwards described in Section \ref{sec:comonads}
we know that the projection \(\pi_{1} : (\E_{2} \downarrow H) \to
\E_{1}\) is a locally cartesian closed functor.

\begin{corollary}
  \label{cor:w-types-in-gluing}
  Let \(H : \E_{1} \to \E_{2}\) be a cartesian functor between
  extensive locally cartesian closed categories. If \(\E_{1}\) and
  \(\E_{2}\) have \(W\)-types, then so does \((\E_{2} \downarrow
  H)\). Moreover, the projection
  \(\pi_{1} : (\E_{2} \downarrow H) \to \E_{1}\) preserves
  \(W\)-types.
\end{corollary}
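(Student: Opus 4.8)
The plan is to recognize the comma category $(\E_2 \downarrow H)$ as a category of coalgebras for the cartesian comonad $G$ on $\E_1 \times \E_2$ defined by $G(X_1, X_2) = (X_1, HX_1 \times X_2)$, as noted in the text preceding the statement, and then apply Theorem~\ref{thm:w-types-of-coalgebras}. To invoke that theorem I need three ingredients: that $\E_1 \times \E_2$ is an extensive locally cartesian closed category with $W$-types; that there is a class of polynomial functors $\PP$ on $\E_1 \times \E_2$; and that the underlying endofunctor of $G$ belongs to $\PP(1,1)$.

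First I would check the hypotheses on $\E_1 \times \E_2$. Local cartesian closedness of a finite product of locally cartesian closed categories is immediate since slices decompose as $(\E_1 \times \E_2)/(X_1,X_2) \cong \E_1/X_1 \times \E_2/X_2$ and all the adjoint functors act componentwise; extensivity follows because $1 + 1$ in the product is $(1+1, 1+1)$ and disjointness is checked componentwise, so Proposition~\ref{prop:extensive-lccc} applies; and $W$-types exist because an endopolynomial in the product is a pair of endopolynomials, whose initial algebras can be formed componentwise. For the class $\PP$, the natural choice is $\Poly_{\E_1 \times \E_2}$ itself, i.e.\ the class of functors represented by polynomials; by the Example following the definition of a class of polynomial functors this is a legitimate class of polynomial functors since $\E_1 \times \E_2$ has $W$-types.

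The remaining point is that the underlying functor of $G$, namely $(X_1, X_2) \mapsto (X_1, HX_1 \times X_2)$ on $\E_1 \times \E_2 \cong (\E_1 \times \E_2)/1$, is a polynomial functor, i.e.\ belongs to $\Poly(1,1)$. This is the one place where something genuinely must be said rather than decomposed componentwise, since the functor mixes the two factors. I would exhibit it as a composite of polynomial functors using Proposition~\ref{prop:polynomial-composition}: the functor $X_1 \mapsto HX_1$ is a cartesian (finite-limit-preserving) functor $\E_1 \to \E_2$, hence is represented by a polynomial from $1$ to $1$ across the two categories (concretely, pulling back along $1 \to 1$, pushing forward along the same, precomposed with $H$ on the terminal slices); the binary product functor $\E_2 \times \E_2 \to \E_2$, $(Y, X_2) \mapsto Y \times X_2$, is polynomial; the diagonal $X_1 \mapsto (X_1, X_1)$ and the evident projection/pairing functors between $\E_1 \times \E_2$ and its factors are polynomial; and assembling these shows $(X_1,X_2) \mapsto (X_1, HX_1 \times X_2)$ is a composite of polynomials, hence in $\Poly(1,1)$. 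The main obstacle is thus making the bookkeeping of this composite precise — in particular being careful that a cartesian functor between locally cartesian closed categories genuinely counts as a polynomial functor in the relevant sense here, and that products with a fixed object are polynomial — but all of this is routine once the decomposition is written out. Theorem~\ref{thm:w-types-of-coalgebras} then yields that $(\E_2 \downarrow H)$ has $W$-types.

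Finally, for the preservation claim: the text preceding the statement records that $\pi_1 : (\E_2 \downarrow H) \to \E_1$ is a locally cartesian closed functor, and $\pi_1$ has a right adjoint (sending $X_1 \in \E_1$ to the object $HX_1 \to HX_1$, i.e.\ the identity on $HX_1$, which is terminal in the fibre over $X_1$ — or more simply, $\pi_1$ is the composite of the forgetful functor $U_G$ with a projection, both of which have right adjoints). Hence by Proposition~\ref{prop:left-adjoint-preserves-w-types}, $\pi_1$ preserves $W$-types.
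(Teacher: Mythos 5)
The key step of your argument --- that the underlying endofunctor \(G(X_{1}, X_{2}) = (X_{1}, HX_{1} \times X_{2})\) of the comonad belongs to \(\Poly_{\cat{E}_{1} \times \cat{E}_{2}}((1,1),(1,1))\) --- is false in general, and it fails for a structural reason that no amount of bookkeeping can repair. A polynomial in \(\cat{E}_{1} \times \cat{E}_{2}\) is a triple of morphisms, each of which is a pair of morphisms in the two factors; since slices decompose as \((\cat{E}_{1} \times \cat{E}_{2})/(I_{1}, I_{2}) \cong \cat{E}_{1}/I_{1} \times \cat{E}_{2}/I_{2}\) and \(h^{*}\), \(g_{*}\), \(f_{!}\) all act componentwise, every functor in \(\Poly_{\cat{E}_{1} \times \cat{E}_{2}}((I_{1}, I_{2}), (J_{1}, J_{2}))\) is isomorphic to a product \(P_{1} \times P_{2}\) of polynomial functors on the factors, and composites of such product functors are again product functors. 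Hence your proposed decomposition of \(G\) as a composite of polynomials inside \(\cat{E}_{1} \times \cat{E}_{2}\) cannot exist: if \(G \cong P_{1} \times P_{2}\) then evaluating at \(X_{2} = 1\) forces \(HX_{1} \cong P_{2}(1)\) for all \(X_{1}\), i.e.\ \(H\) essentially constant, which already fails for \(H\) the identity. The intermediate claim that a cartesian functor \(H : \cat{E}_{1} \to \cat{E}_{2}\) is ``represented by a polynomial from \(1\) to \(1\) across the two categories'' has no meaning in the paper's framework, where a polynomial lives in a single locally cartesian closed category; and a functor that merely preserves finite limits is not in general polynomial. This is precisely why the paper introduces classes of polynomial functors more general than \(\Poly\): the remark after that definition says the generalization is needed for this gluing application.

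The paper's proof instead applies Theorem \ref{thm:w-types-of-coalgebras} with \(\class{P} = \Poly_{\cat{E}_{1}} \rtimes \Poly_{\cat{E}_{2}}\), the class of functors of the form \((F_{1}\pi_{1}, F_{2})\) with \(F_{1}\) polynomial and \(F_{2}(K, -)\) polynomial for every fixed \(K\); Proposition \ref{prop:semiproduct-polynomials} is the real content, verifying closure under composition and slicing and constructing initial algebras in two stages (first the initial \(F_{1}\)-algebra \(W_{1}\), then the initial \(F_{2}(W_{1}, -)\)-algebra). The comonad's underlying functor is \((\mathrm{id} \circ \pi_{1}, F_{2})\) with \(F_{2}(X_{1}, X_{2}) = HX_{1} \times X_{2}\), and for each fixed \(X_{1}\) the functor \(X_{2} \mapsto HX_{1} \times X_{2}\) is polynomial on \(\cat{E}_{2}\), so \(G\) lies in this larger class even though it is not polynomial on the product. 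The remaining parts of your proposal --- that \(\cat{E}_{1} \times \cat{E}_{2}\) is extensive, locally cartesian closed and has \(W\)-types (computed componentwise), and that the preservation claim follows from Proposition \ref{prop:left-adjoint-preserves-w-types} since \(\pi_{1}\) is locally cartesian closed with right adjoint \(X_{1} \mapsto (X_{1}, \mathrm{id}_{HX_{1}})\) --- are correct and agree with the paper.
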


To show Corollary \ref{cor:w-types-in-gluing}, we define an appropriate
class of polynomial functors on \(\E_{1} \times \E_{2}\).

\begin{definition}
  Let \(\E_{1}\) and \(\E_{2}\) be locally cartesian closed categories
  with \(W\)-types and \(\PP_{1}\) and \(\PP_{2}\) classes of
  polynomial functors on \(\E_{1}\) and \(\E_{2}\) respectively. For
  objects \((I_{1}, I_{2}), (J_{1}, J_{2}) \in \E_{1} \times \E_{2}\),
  we define a class
  \(\PP_{1} \rtimes \PP_{2}((I_{1}, I_{2}), (J_{1}, J_{2}))\) of
  functors
  \((\E_{1} \times \E_{2})/(I_{1}, I_{2}) \cong \E_{1}/I_{1} \times
  \E_{2}/I_{2} \to \E_{1}/J_{1} \times \E_{2}/J_{2} \cong (\E_{1}
  \times \E_{2})/(J_{1}, J_{2})\) to be the class of functors of the
  form \((F_{1} \circ \pi_{1}, F_{2})\) with
  \(F_{1} : \E_{1}/I_{1} \to \E_{1}/J_{1}\) and
  \(F_{2} : \E_{1}/I_{1} \times \E_{2}/I_{2} \to \E_{2}/J_{2}\) such
  that \(F_{1}\) belongs to \(\PP_{1}(I_{1}, J_{1})\) and
  \(F_{2}(K, -)\) belongs to \(\PP_{2}(I_{2}, J_{2})\) for every
  \(K \in \E_{1}/I_{1}\).
\end{definition}

\begin{proposition}
  \label{prop:semiproduct-polynomials}
  For classes \(\PP_{1}\) and \(\PP_{2}\) of polynomial
  functors on locally cartesian closed categories with \(W\)-types
  \(\E_{1}\) and \(\E_{2}\) respectively, \(\PP_{1}
  \rtimes \PP_{2}\) is a class of polynomial functors on
  \(\E_{1} \times \E_{2}\).
\end{proposition}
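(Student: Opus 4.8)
The plan is to verify the five axioms of a class of polynomial functors for $\PP_1 \rtimes \PP_2$ one at a time, leaning on the fact that products and slices of locally cartesian closed categories interact well with the polynomial structure. Throughout I will use the identification $(\E_1 \times \E_2)/(I_1, I_2) \cong \E_1/I_1 \times \E_2/I_2$, under which a morphism $(f_1, f_2)$ has pullback, left adjoint and pushforward computed componentwise; in particular $\Poly_{\E_1 \times \E_2}((I_1,I_2),(J_1,J_2))$ consists (up to iso) of functors $(P_1 \circ \pi_1, P_2 \circ \pi_2)$ with $P_i \in \Poly_{\E_i}(I_i, J_i)$. Note such a functor is of the required form $(F_1 \circ \pi_1, F_2)$ with $F_1 = P_1$ and $F_2 = P_2 \circ \pi_2$, and $F_2(K,-) = P_2 \in \PP_2(I_2,J_2)$ for every $K$; this establishes axiom~(1). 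Axiom~(3), closure under isomorphism, is immediate since the defining conditions on $F_1$ and on each $F_2(K,-)$ are stated up to isomorphism and both $\PP_1$ and $\PP_2$ satisfy axiom~(3).

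For axiom~(2), composition, suppose $(F_1 \circ \pi_1, F_2) : \E_1/I_1 \times \E_2/I_2 \to \E_1/J_1 \times \E_2/J_2$ and $(G_1 \circ \pi_1, G_2) : \E_1/J_1 \times \E_2/J_2 \to \E_1/K_1 \times \E_2/K_2$ both lie in the respective classes. The composite sends $(X_1,X_2)$ to $\bigl(G_1(F_1 X_1),\, G_2(F_1 X_1, F_2(X_1,X_2))\bigr)$, so its first component is $(G_1 F_1) \circ \pi_1$, and $G_1 F_1 \in \PP_1(I_1, K_1)$ by axiom~(2) for $\PP_1$. For the second component, fix $K \in \E_1/I_1$; then the functor $X_2 \mapsto G_2\bigl(F_1 K, F_2(K, X_2)\bigr)$ is the composite of $F_2(K, -) \in \PP_2(I_2, J_2)$ with $G_2(F_1 K, -) \in \PP_2(J_2, K_2)$, hence lies in $\PP_2(I_2, K_2)$ by axiom~(2) for $\PP_2$. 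Axiom~(5), existence of initial algebras, is handled by Proposition~\ref{prop:algebra-of-coalgebra}-flavoured reasoning applied directly: an endofunctor in $\PP_1 \rtimes \PP_2((I_1,I_2),(I_1,I_2))$ has the form $(F_1 \circ \pi_1, F_2)$, so an algebra is a pair $(s_1 : F_1 X_1 \to X_1,\, s_2 : F_2(X_1, X_2) \to X_2)$. The first coordinate is just an $F_1$-algebra, so take an initial one $(W_1, s_1)$ (exists by axiom~(5) for $\PP_1$); then $F_2(W_1, -)$ is an endofunctor in $\PP_2(I_2,I_2)$, which has an initial algebra $(W_2, s_2)$ by axiom~(5) for $\PP_2$, and a standard argument shows $((W_1,W_2),(s_1,s_2))$ is initial.

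The main obstacle is axiom~(4), closure under slicing, because slicing a functor of the triangular form $(F_1 \circ \pi_1, F_2)$ does not obviously produce a functor of the same form, and $F_2$ depends on the $\E_1$-coordinate in a way that must be tracked through the slice. Given $P = (F_1 \circ \pi_1, F_2)$ in the class and an object $(K_1, K_2) \in \E_1/I_1 \times \E_2/I_2$, we have $P(K_1, K_2) = (F_1 K_1, F_2(K_1, K_2))$, and $P/(K_1,K_2)$ is a functor $\E_1/K_1 \times \E_2/K_2 \to \E_1/F_1 K_1 \times \E_2/F_2(K_1,K_2)$. Its first component is the composite $\E_1/K_1 \times \E_2/K_2 \xrightarrow{\pi_1} \E_1/K_1 \xrightarrow{F_1/K_1} \E_1/F_1 K_1$, and $F_1/K_1 \in \PP_1(K_1, F_1 K_1)$ by axiom~(4) for $\PP_1$; the subtlety is only that $P/(K_1,K_2)$ lands in the slice over $F_1 K_1 = (F_1 \circ \pi_1)(K_1,K_2)$, which is exactly what the definition of $\PP_1 \rtimes \PP_2$ over the codomain object $P(K_1,K_2)$ demands. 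For the second component, I would observe that the functor $F_2 : \E_1/I_1 \times \E_2/I_2 \to \E_2/J_2$ restricted along the slice is, for each fixed $L \in \E_1/K_1$, a functor $\E_2/K_2 \to \E_2/F_2(K_1,K_2)$ obtained as a slice of $F_2(L', -) \in \PP_2(I_2, J_2)$ (where $L'$ is the composite $L \to K_1 \to I_1$) at the object $K_2 \in \E_2/I_2$, reindexed along the canonical map $F_2(L', K_2) \to F_2(K_1, K_2)$; by axiom~(4) and axiom~(2) for $\PP_2$ this lands in $\PP_2(K_2, F_2(K_1,K_2))$. Assembling these componentwise identifications — and checking that the functoriality in the $\E_1$-coordinate is compatible, so the whole thing genuinely has the form $(\tilde F_1 \circ \pi_1, \tilde F_2)$ — is the technical heart of the argument; it is routine given how pullbacks and the adjoints behave componentwise in a product of locally cartesian closed categories, but it requires care to get the base objects and reindexing maps right.
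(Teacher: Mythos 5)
Your proof is correct and follows essentially the same route as the paper's: axioms (1)--(3) are handled componentwise, the initial algebra for \((F_{1}\pi_{1},F_{2})\) is built sequentially from an initial \(F_{1}\)-algebra \((W_{1},s_{1})\) and then an initial \(F_{2}(W_{1},-)\)-algebra, and slicing is reduced via \((F_{1}\pi_{1},F_{2})/(K_{1},K_{2})=((F_{1}/K_{1})\pi_{1},F_{2}/(K_{1},K_{2}))\) to the observation that \((F_{2}/(K_{1},K_{2}))(X_{1},-)\) is the slice of \(F_{2}(X_{1},-)\) at \(K_{2}\) followed by transport into the slice over \(F_{2}(K_{1},K_{2})\). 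The only wording to tighten is that this transport is post-composition along the canonical map \(F_{2}(X_{1},K_{2})\to F_{2}(K_{1},K_{2})\) --- the paper's \((X_{1})_{!}\), a polynomial functor and hence in \(\class{P}_{2}\) by axiom (1) --- rather than a ``reindexing,'' which would go in the wrong direction.
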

\begin{proof}
  A functor of the form \(F_{1} \times F_{2}\) with
  \(F_{1} \in \PP_{1}(I_{1}, J_{1})\) and
  \(F_{2} \in \PP_{2}(I_{2}, J_{2})\) belongs to
  \(\PP_{1} \rtimes \PP_{2}((I_{1}, I_{2}), (J_{1}, J_{2}))\), and
  thus
  \(\Poly_{\E_{1} \times \E_{2}}((I_{1}, I_{2}), (J_{1}, J_{2}))
  \subset \PP_{1} \rtimes \PP_{2}\). It is also clear that
  \(\PP_{1} \rtimes \PP_{2}\) is closed under isomorphism. To see that
  it is closed under composition, let
  \(F_{1} : \E_{1}/I_{1} \to \E_{1}/J_{1}\),
  \(F_{2} : \E_{1}/I_{1} \times \E_{2}/I_{2} \to \E_{2}/J_{2}\),
  \(G_{1} : \E_{1}/J_{1} \to \E_{1}/K_{1}\) and
  \(G_{2} : \E_{1}/J_{1} \times \E_{2}/J_{2} \to \E_{2}/K_{2}\) be
  functors such that \(F_{1} \in \PP_{1}(I_{1}, J_{1}))\),
  \(G_{1} \in \PP_{1}(J_{1}, K_{1})\),
  \(F_{2}(X_{1}, -) \in \PP_{2}(I_{2}, J_{2})\) and
  \(G_{2}(Y_{1}, -) \in \PP_{2}(J_{2}, K_{2})\) for any
  \(X_{1} \in \E_{1}/I_{1}\) and \(Y_{1} \in \E_{1}/J_{1}\). Then we
  have
  \((G_{1}\pi_{1}, G_{2}) \circ (F_{1}\pi_{1}, F_{2}) =
  (G_{1}F_{1}\pi_{1}, G_{2}(F_{1}\pi_{1}, F_{2}))\) and
  \(G_{1}F_{1} \in \PP_{1}(I_{1}, K_{1})\) and
  \(G_{2}(F_{1}\pi_{1}, F_{2})(X_{1}, -) = G_{2}(F_{1}(X_{1}), -)
  \circ F_{2}(X_{1}, -) \in \PP_{2}(I_{2}, K_{2})\) for any
  \(X_{1} \in \E_{1}/I_{1}\).

  To show that \(\PP_{1} \rtimes \PP_{2}\) is closed under slice, let
  \(F_{1} : \E_{1}/I_{1} \to \E_{1}/J_{1}\) and
  \(F_{2} : \E_{1}/I_{1} \times \E_{2}/I_{2} \to \E_{2}/J_{2}\) be
  functors and \((K_{1}, K_{2}) \in \E_{1}/I_{1} \times \E_{2}/I_{2}\)
  objects such that \(F_{1} \in \PP_{1}(I_{1}, J_{1})\) and
  \(F_{2}(X_{1}, -) \in \PP_{2}(I_{2}, J_{2})\) for any
  \(X_{1} \in \E_{1}/I_{1}\). Then we have \((F_{1}\pi_{1},
  F_{2})/(K_{1}, K_{2}) = ((F_{1}/K_{1})\pi_{1}, F_{2}/(K_{1},
  K_{2}))\) and \(F_{1}/K_{1} \in \PP_{1}(K_{1}, FK_{1})\). We also
  have \((F_{2}/(K_{1}, K_{2}))(X_{1}, -) \in \PP_{2}(K_{2},
  F_{2}(K_{1}, K_{2}))\) for any \(X_{1} \in \E_{1}/K_{1}\) because it
  is the composite
  \[
    \begin{tikzcd}[column sep=10ex]
      \E_{1}/K_{1} \arrow[r,"{F_{2}(X_{1}, -)/K_{2}}"] &
      \E_{2}/F_{2}(X_{1}, K_{2}) \arrow[r,"{(X_{1})_{!}}"] &
      \E_{2}/F_{2}(K_{1}, K_{2}).
    \end{tikzcd}
  \]

  Finally we show the existence of initial algebras. Let
  \(F_{1} : \E_{1}/I_{1} \to \E_{1}/I_{1}\) and
  \(F_{2} : \E_{1}/I_{1} \times \E_{2}/I_{2} \to \E_{2}/I_{2}\) be
  functors such that \(F_{1} \in \PP_{1}(I_{1}, I_{1})\) and
  \(F_{2}(X_{1}, -) \in \PP_{2}(I_{2}, I_{2})\) for any
  \(X_{1} \in \E_{1}/I_{1}\). Let \((W_{1}, s_{1})\) be the initial
  \(F_{1}\)-algebra and \((W_{2}, s_{2})\) the initial
  \(F_{2}(W_{1}, -)\)-algebra. Then the object
  \((W_{1}, W_{2}) \in \E_{1}/I_{1} \times \E_{2}/I_{2}\) and the
  morphism
  \((s_{1}, s_{2}) : (F_{1}W_{1}, F_{2}(W_{1}, W_{2})) \to (W_{1},
  W_{2})\) form a \((F_{1}\pi_{1}, F_{2})\)-algebra. Let
  \((t_{1}, t_{2}) : (F_{1}X_{1}, F_{2}(X_{1}, X_{2})) \to (X_{1},
  X_{2})\) be another \((F_{1}\pi_{1}, F_{2})\)-algebra. A morphism
  \(((W_{1}, W_{2}), (s_{1}, s_{2})) \to ((X_{1}, X_{2}), (t_{1},
  t_{2}))\) of \((F_{1}\pi_{1}, F_{2})\)-algebras consists of a
  morphism \(h_{1} : W_{1} \to X_{1}\) and \(h_{2} : W_{2} \to X_{2}\)
  such that \(h_{1} \circ s_{1} = t_{1} \circ F_{1}h_{1}\) and
  \(h_{2} \circ s_{2} = t_{2} \circ F_{2}(h_{1}, h_{2})\). The second
  condition is equivalent to that \(h_{2}\) is a morphism of
  \(F_{2}(W_{1}, -)\)-algebras
  \((W_{2}, s_{2}) \to (X_{2}, t_{2} \circ F_{2}(h_{1},
  X_{2}))\). Hence, there exists a unique pair of morphisms
  \((h_{1}, h_{2})\) satisfying these conditions by the initiality of
  \((W_{1}, s_{1})\) and \((W_{2}, s_{2})\).
\end{proof}

\begin{proof}[Proof of Corollary \ref{cor:w-types-in-gluing}]
  Apply Theorem \ref{thm:w-types-of-coalgebras} with
  \(\PP = \Poly_{\E_{1}} \rtimes \Poly_{\E_{2}}\). The preservation of
  \(W\)-types follows from Proposition
  \ref{prop:left-adjoint-preserves-w-types} because \(\pi_{1}\) has a
  right adjoint.
\end{proof}
\endgroup
\endgroup

\section{Algebras for the Equalizer of a Coreflexive Pair}
\label{sec:algebr-equal-corefl}
\begingroup
\newcommand{\E}{\cat{E}}

In this section we prove Lemma \ref{lem:algebra-for-equalizer}: the
existence of a well-founded fixed point for an endofunctor defined as
the equalizer of a coreflexive pair.

\begin{lemma}
  \label{lem:equalizer-induces-mono}
  Suppose that we are given the following commutative diagram in a
  cartesian category
  \[
    \begin{tikzcd}
      X \arrow[r,tail] &
      Y \arrow[r,shift left,"u"]
      \arrow[r,shift right,"v"']
      \arrow[d,"p"] &
      Z \arrow[d,tail,"q"] \\
      A \arrow[r,tail] &
      B \arrow[r,shift left,"f"]
      \arrow[r,shift right,"g"'] &
      C
    \end{tikzcd}
  \]
  where \(A\) is the equalizer of \(f\) and \(g\), \(X\) is the
  equalizer of \(u\) and \(v\), and \(q\) is a monomorphism. Then the
  induced morphism \(X \to A\) is the pullback of \(p\) along
  \(A \to B\).
\end{lemma}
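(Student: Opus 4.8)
The plan is to show that the square formed by $X \rightarrowtail Y$, $X \to A$, $p$, and $A \rightarrowtail B$ is a pullback by verifying the universal property directly. So suppose we are given an object $T$ together with morphisms $a : T \to A$ and $y : T \to Y$ such that the outer square commutes, i.e. $(A \rightarrowtail B) \circ a = p \circ y$. I must produce a unique $t : T \to X$ with $(X \rightarrowtail Y) \circ t = y$ and $(X \to A) \circ t = a$; since $X \rightarrowtail Y$ is monic, uniqueness is automatic once existence is established, and the second condition will follow from the first because $Y \to B$ is monic (being a composite involving the monic $A \rightarrowtail B$ after using that $A$ is the equalizer — more precisely $p$ restricted appropriately; I'll need to be a little careful here, see below).

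The key step is to check that $y : T \to Y$ factors through the equalizer $X \rightarrowtail Y$, i.e. that $u \circ y = v \circ y$. Here is where all the hypotheses come together. From the right-hand square commuting with both $u$ and $v$ we have $q \circ u = f \circ p$ and $q \circ v = g \circ p$. Now $f \circ p \circ y = f \circ (A \rightarrowtail B) \circ a$ and $g \circ p \circ y = g \circ (A \rightarrowtail B) \circ a$; but $A$ is the equalizer of $f$ and $g$, so $f \circ (A \rightarrowtail B) = g \circ (A \rightarrowtail B)$, hence $q \circ u \circ y = q \circ v \circ y$. Since $q$ is a monomorphism, we conclude $u \circ y = v \circ y$, so $y$ factors uniquely as $y = (X \rightarrowtail Y) \circ t$ for some $t : T \to X$.

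It remains to see $(X \to A) \circ t = a$. Composing with the monomorphism $A \rightarrowtail B$, it suffices to check $(A \rightarrowtail B) \circ (X \to A) \circ t = (A \rightarrowtail B) \circ a$. The left side equals $p \circ (X \rightarrowtail Y) \circ t = p \circ y$ by the commutativity of the left square in the statement, and $p \circ y = (A \rightarrowtail B) \circ a$ was our hypothesis on $T$. This gives the factorization, and uniqueness of $t$ follows from $X \rightarrowtail Y$ being monic. I do not expect a genuine obstacle here; the only point requiring mild care is bookkeeping the two monomorphisms ($q$ and $A \rightarrowtail B$) and making sure the equalizer property of $A$ is invoked at exactly the right place, namely to cancel $f$ against $g$ after precomposing with $A \rightarrowtail B$. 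Note that cartesianness of the ambient category is not really needed for this argument beyond the existence of the equalizers already named in the hypothesis; it is presumably stated for uniformity with the rest of the section.
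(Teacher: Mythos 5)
Your proof is correct and is essentially the paper's argument: the paper states it as a chain of equivalent conditions on a test morphism $h : W \to Y$ (namely, $h$ factors through $X$ iff $uh = vh$ iff $quh = qvh$ iff $fph = gph$ iff $ph$ factors through $A$), which is exactly your computation using monicity of $q$ and the equalizer property of $A$, repackaged as a direct verification of the pullback's universal property. Your way of recovering the condition $(X \to A)\circ t = a$ by composing with the monomorphism $A \rightarrowtail B$ is the correct fix for your slightly muddled aside about ``$Y \to B$ being monic'', and your observation that cartesianness is not really needed is accurate.
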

\begin{proof}
  For a morphism \(h : W \to Y\), the following conditions are
  equivalent.
  \begin{itemize}
  \item \(h\) factors through \(X\)
  \item \(uh = vh\)
  \item \(quh = qvh\)
  \item \(fph = gph\)
  \item \(ph\) factors through \(A\)
  \end{itemize}
\end{proof}

\begin{proof}
  [Proof of Lemma \ref{lem:algebra-for-equalizer}]
  Let \(s_{0} : P_{0}W_{0} \to W_{0}\) and
  \(s_{1} : P_{1}W_{1} \to W_{1}\) be initial \(P_{0}\)- and
  \(P_{1}\)-algebras respectively. By the initiality of \(W_{0}\) and
  \(W_{1}\), we have morphisms \(u, v : W_{0} \to W_{1}\) and
  \(e : W_{1} \to W_{0}\) such that
  \(u \circ s_{0} = s_{1} \circ \varphi \circ P_{0}u\),
  \(v \circ s_{0} = s_{1} \circ \psi \circ P_{0}v\),
  \(e \circ s_{1} = s_{0} \circ \varepsilon \circ P_{1}e\) and
  \(eu = ev = 1\). Let \(i : W \to W_{0}\) be the equalizer of \(u\)
  and \(v\). Consider the diagram
  \[
    \begin{tikzcd}
      PW \arrow[r,tail,"\iota(W)"]
      \arrow[d,tail,"Pi"'] &
      P_{0}W \arrow[r,shift left,"\varphi(W)"]
      \arrow[r,shift right,"\psi(W)"']
      \arrow[d,tail,"P_{0}i"] &
      P_{1}W \arrow[d,tail,"P_{1}i"] \\
      PW_{0} \arrow[r,tail,"\iota(W_{0})"]
      \arrow[d,shift left,"Pu"]
      \arrow[d,shift right,"Pv"'] &
      P_{0}W_{0} \arrow[r,shift left,"\varphi(W_{0})"]
      \arrow[r,shift right,"\psi(W_{0})"']
      \arrow[d,shift left,"P_{0}u"]
      \arrow[d,shift right,"P_{0}v"'] &
      P_{1}W_{0} \arrow[d,shift left,"P_{1}u"]
      \arrow[d,shift right,"P_{1}v"'] \\
      PW_{1} \arrow[r,tail,"\iota(W_{1})"'] &
      P_{0}W_{1} \arrow[r,shift left,"\varphi(W_{1})"]
      \arrow[r,shift right,"\psi(W_{1})"'] &
      P_{1}W_{1}
    \end{tikzcd}
  \]
  where all rows are equalizers. Since \(P_{1}\) preserves
  monomorphisms, \(PW\) is the pullback of \(PW_{0}\) along
  \(P_{0}i : P_{0}W \to P_{0}W_{0}\) by Lemma
  \ref{lem:equalizer-induces-mono} . \(P_{0}\) also preserves
  monomorphisms because it preserves pullbacks. Therefore \(PW\) is
  the intersection of \(PW_{0}\) and \(P_{0}W\) in the poset of
  subobjects of \(P_{0}W_{0}\). For a morphism
  \(f : Z \to P_{0}W_{0}\), the following conditions are equivalent.
  \begin{enumerate}
  \item \label{item:8}
    \(f\) factors through \(PW\).
  \item \label{item:9}
    \(\varphi(W_{0}) \circ f = \psi(W_{0}) \circ f\) and \(P_{0}u
    \circ f = P_{0}v \circ f\).
  \item \label{item:10}
    \(P_{1}u \circ \varphi(W_{0}) \circ f = P_{1}v \circ \psi(W_{0})
    \circ f\).
  \item \label{item:11}
    \(\varphi(W_{1}) \circ P_{0}u \circ f = \psi(W_{1}) \circ P_{0}v
    \circ f\).
  \end{enumerate}
  The conditions \ref{item:8} and \ref{item:9} are equivalent by
  definition. The conditions \ref{item:10} and \ref{item:11} are
  equivalent by the naturality of \(\varphi\) and \(\psi\). From
  \ref{item:9}, we have \ref{item:10} as
  \(P_{1}u \circ \varphi(W_{0}) \circ f = P_{1}u \circ \psi(W_{0})
  \circ f = \psi(W_{1}) \circ P_{0}u \circ f = \psi(W_{1}) \circ
  P_{0}v \circ f = P_{1}v \circ \psi(W_{0}) \circ f\). \ref{item:10}
  and \ref{item:11} imply \ref{item:9} because \(eu = ev = 1\) and
  \(\varepsilon\varphi = \varepsilon\psi = 1\). Hence, \(PW\) is also
  an equalizer of \(\varphi(W_{1}) \circ P_{0}u\) and
  \(\psi(W_{1}) \circ P_{0}v\). In the following diagram,
  \[
    \begin{tikzcd}[column sep=10ex]
      PW \arrow[r,tail]
      \arrow[d,dotted,"s"'] &
      P_{0}W_{0} \arrow[r,shift left,"\varphi(W_{1}) \circ P_{0}u"]
      \arrow[r,shift right,"\psi(W_{1}) \circ P_{0}v"']
      \arrow[d,"s_{0}"] &
      P_{1}W_{1} \arrow[d,"s_{1}"] \\
      W \arrow[r,tail,"i"'] &
      W_{0} \arrow[r,shift left,"u"]
      \arrow[r,shift right,"v"'] &
      W_{1}
    \end{tikzcd}
  \]
  all rows are equalizers and \(s_{0}\) and \(s_{1}\) are
  isomorphisms, and thus the morphism \(PW \to W_{0}\) factors as
  \(i \circ s\) with an isomorphism \(s : PW \to W\).

  We show that \((W, s)\) is a well-founded \(P\)-algebra. Let
  \((X, t : PX \to X)\) be a \(P\)-algebra and
  \(j : X \rightarrowtail W\) a monomorphism that is a morphism of
  \(P\)-algebras. Let \(X_{0}\) denote the pushforward
  \(i_{*}X\). Since \(i\) is a monomorphism, we have a pullback
  \[
    \begin{tikzcd}
      X \arrow[r,tail] \arrow[d,tail] &
      X_{0} \arrow[d,tail] \\
      W \arrow[r,tail,"i"'] &
      W_{0}.
    \end{tikzcd}
  \]
  Hence, it suffices to show that the monomorphism \(X_{0} \to W_{0}\)
  is an isomorphism. To prove this, it is enough to see that \(X_{0}\)
  carries a \(P_{0}\)-algebra structure over
  \(s_{0} : P_{0}W_{0} \to W_{0}\), that is, the composite
  \begin{tikzcd}[column sep=3ex]
    P_{0}X_{0} \arrow[r] &
    P_{0}W_{0} \arrow[r,"s_{0}"] &
    W_{0}
  \end{tikzcd}
  factors through \(X_{0} \rightarrowtail W_{0}\).
  \[
    \begin{tikzcd}
      P_{0}X_{0} \arrow[d]
      \arrow[r,dotted] &
      X_{0} \arrow[d,tail] \\
      P_{0}W_{0} \arrow[r,"s_{0}"'] &
      W_{0}
    \end{tikzcd}
  \]
  Since \(X_{0} = i_{*}X\) and \(i^{*}P_{0}W_{0} \cong PW\), it is
  equivalent to that the composite
  \begin{tikzcd}[column sep=3ex]
    i^{*}P_{0}X_{0} \arrow[r] &
    i^{*}P_{0}W_{0} \cong PW \arrow[r,"s"] &
    W
  \end{tikzcd}
  factors through \(X \rightarrowtail W\). Since \(X\) has a
  \(P\)-algebra structure over \(s : PW \to W\), it suffices to show
  that \(i^{*}P_{0}X_{0} \to PW\) factors through
  \(PX \rightarrowtail PW\). Now \(P\) is the equalizer of
  \(\varphi, \psi : P_{0} \To P_{1}\) and
  \(P_{1}X \to P_{1}W\) is a monomorphism, and thus the square
  \[
    \begin{tikzcd}
      PX \arrow[r,tail] \arrow[d,tail] &
      P_{0}X \arrow[d,tail] \\
      PW \arrow[r,tail,"\iota(W)"'] &
      P_{0}W
    \end{tikzcd}
  \]
  is a pullback by Lemma \ref{lem:equalizer-induces-mono}. Hence, to
  show that \(i^{*}P_{0}X_{0} \to PW\) factors through \(PX\), it
  suffices to show that the composite
  \begin{tikzcd}[column sep=4ex]
    i^{*}P_{0}X_{0} \arrow[r] &
    PW \arrow[r,"\iota(W)"] &
    P_{0}W
  \end{tikzcd}
  factors through \(P_{0}X\). Since \(P_{0}\) preserves pullbacks, we
  have a pullback
  \[
    \begin{tikzcd}
      P_{0}X \arrow[r,tail] \arrow[d,tail] &
      P_{0}X_{0} \arrow[d,tail] \\
      P_{0}W \arrow[r,tail,"P_{0}i"'] &
      P_{0}W_{0},
    \end{tikzcd}
  \]
  and thus it is enough to show that the composite
  \begin{tikzcd}[column sep=4ex]
    i^{*}P_{0}X_{0} \arrow[r] &
    PW \arrow[r,tail,"\iota(W)"] &
    P_{0}W \arrow[r,tail,"P_{0}i"] &
    P_{0}W_{0}
  \end{tikzcd}
  factors through \(P_{0}X_{0} \to P_{0}W_{0}\). But this is true
  because \(i^{*}P_{0}X_{0}\) is the pullback
  \[
    \begin{tikzcd}
      i^{*}P_{0}X_{0} \arrow[rr] \arrow[d] & &
      P_{0}X_{0} \arrow[d] \\
      PW \arrow[r,"Pi"'] &
      PW_{0} \arrow[r,"\iota(W_{0})"'] &
      P_{0}W_{0}
    \end{tikzcd}
  \]
  and \(\iota(W_{0}) \circ Pi = P_{0}i \circ \iota(W)\) by the
  naturality of \(\iota\).
\end{proof}
\endgroup

\printbibliography

\end{document}